\begin{document}

\newcommand{\now}{\count0=\time 
\divide\count0 by 60
\count1=\count0
\multiply\count1 by 60
\count2= \time
\advance\count2 by -\count1
\the\count0:\the\count2}





\newcommand{\C}{{\mathbb C}}
\newcommand{\Sym}{{\text{\rm Sym}}}
\newcommand{\Q}{{\mathbb Q}}

\font\cute=cmitt10 at 12pt
\font\smallcute=cmitt10 at 9pt
\newcommand{\kay}{{\text{\cute k}}}
\newcommand{\smallkay}{{\text{\smallcute k}}}

\newcommand{\Cal}{\mathcal}
\newcommand{\M}{\Cal M}

\newcommand{\End}{\text{\rm End}}
\newcommand{\Spec}{\text{\rm Spec}\, }
\newcommand{\F}{{\mathbb F}}
\newcommand{\R}{{\mathbb R}}
\newcommand{\Hom}{\text{\rm Hom}}
\newcommand{\ZZ}{\Cal Z}
\newcommand{\Z}{{\mathbb Z}}
\newcommand{\hfb}{\hfill\break}
\renewcommand{\o}{\omega}
\newcommand{\ph}{\varphi}
\newcommand{\GL}{\text{\rm GL}}


\newtheorem{theo}{Theorem}[section]
\newtheorem{rem}[theo]{Remark}
\newtheorem{lem}[theo]{Lemma}
\newtheorem{conj}[theo]{Conjecture}
\renewcommand{\theequation}{\thesection.\arabic{equation}}
\numberwithin{equation}{section}


\newcommand{\E}{{\mathbb E}}

\newcommand{\OO}{\text{\rm O}}
\newcommand{\UU}{\text{\rm U}}

\newcommand{\OK}{O_{\smallkay}}
\newcommand{\DI}{\mathcal D^{-1}}

\newcommand{\pre}{\text{\rm pre}}

\newcommand{\Bor}{\text{\rm Bor}}
\newcommand{\Rel}{\text{\rm Rel}}
\newcommand{\rel}{\text{\rm rel}}
\newcommand{\Res}{\text{\rm Res}}
\newcommand{\TG}{\widetilde{G}}

\parindent=0pt
\parskip=6pt
\baselineskip=14pt

\newcommand{\PP}{\mathcal P}
\renewcommand{\OO}{\mathcal O}
\newcommand{\BB}{\mathbb B}
\newcommand{\GU}{\text{\rm GU}}
\newcommand{\Herm}{\text{\rm Herm}}

\newcommand{\FF}{\mathbb F}
\newcommand{\MM}{\mathbb M}
\newcommand{\YY}{\mathbb Y}
\newcommand{\VV}{\mathbb V}
\newcommand{\LL}{\mathbb L}

\newcommand{\subover}[1]{\overset{#1}{\subset}}
\newcommand{\supover}[1]{\overset{#1}{\supset}}

\newcommand{\hgs}[2]{\{#1,#2\}}

\newcommand{\wh}[1]{\widehat{#1}}

\newcommand{\Ker}{\text{\rm Ker}}
\newcommand{\YYbar}{\overline{\YY}}
\newcommand{\MMbar}{\overline{\MM}}
\newcommand{\oY}{\overline{Y}}
\newcommand{\dra}{\dashrightarrow}
\newcommand{\dlra}{\longdashrightarrow}

\newcommand{\yy}{\text{\bf y}}

\newcommand{\red}{\text{\rm red}}

\newcommand{\inc}{\text{\rm inc}}

\newcommand{\OKs}{O_{\smallkay,s}}
\newcommand{\OKr}{O_{\smallkay,r}}
\newcommand{\Xs}{X^{(s)}}
\newcommand{\Xo}{X^{(0)}}

\newcommand{\xss}{\underset{\sim}{x}}
\newcommand{\xxss}{\underset{\sim}{\xx}}
\newcommand{\yss}{\underset{\sim}{y}}

\renewcommand{\ss}{\text{\rm ss}}

\newcommand{\OKp}{O_{\smallkay,p}}

\newcommand{\cutter}{\vskip .14in\hrule\vskip .04in}

\newcommand{\I}{\mathbb I}

\newcommand{\hh}{\mathtt h}
\newcommand{\Nilp}{\text{\rm Nilp}}
\newcommand{\nai}{\text{\rm naive}}

\newcommand{\nut}{\widetilde{\nu}}
\newcommand{\oht}{\widehat{O}^\times}
\newcommand{\oh}{\widehat{O}}
\newcommand{\zh}{\widehat{\Z}}
\newcommand{\zht}{\zh^\times}
\newcommand{\ktaf}{\kay^\times_{\A_f}}
\newcommand{\aaa}{\frak a}


\newcommand{\Lie}{\text{\rm Lie}\, }
\newcommand{\CK}{C_{\smallkay}}

\newcommand{\N}{\Cal N}
\newcommand{\uuxx}{\und{\und{\xx}}}

\newcommand{\OKt}{\widetilde{O}_\smallkay}
\newcommand{\Lt}{\widetilde{L}}

\newcommand{\sra}{\rightarrow}

\newcommand{\MHr}{\M}   
\newcommand{\MH}{\M}

\newcommand{\Sh}{\text{\rm Sh}}
\newcommand{\SSh}{\und{\Sh}}

\renewcommand{\aa}{a}

\newcommand{\LLL}{[[L]]}

\setcounter{tocdepth}{1}

\title{On occult period maps}

\author{Stephen Kudla
\medskip\\
and
\medskip\\
Michael Rapoport}

\maketitle

\centerline{\it \hfill In memoriam  Jonathan Rogawski}

\bigskip
\centerline{\bf Abstract} 
We interpret the``occult" period maps of Allcock, Carlson, Toledo \cite{ACT1, ACT2}, resp. of 
Looijenga, Swierstra \cite{LS1, LS2}, resp. of Kondo \cite{K1, K2} in moduli theoretic terms, 
as a construction of certain families of polarized abelian varieties of Picard type. We show that 
these period maps are morphisms defined over their natural field of definition. 

\section{Introduction}\label{introduction}

In papers of Allcock, Carlson, Toledo \cite{ACT1, ACT2}, resp. of Looijenga, Swierstra \cite{LS1, LS2}, resp. of 
Kondo \cite{K1, K2},  "hidden" period maps are constructed in certain  cases. The target spaces of these maps 
are certain {\it arithmetic quotients of complex unit balls}. The basic 
observation, which is the starting  point of this paper, is that these arithmetic quotients can be interpreted as the complex points of certain {\it moduli spaces of abelian varieties 
of Picard type}, of the kind  considered in our paper \cite{KR2}. Consequently, the purpose in this paper is to interpret these hidden period maps in moduli-theoretic terms.  
The pay-off of this exercise is that we can raise and partially answer some {\it descent problems} which seem natural from our view point, and which are related to a  similar 
descent problem addressed by Deligne in \cite{De1} in his theory of {\it complete intersections of Hodge level one}.

Why do we speak of "hidden", or "occult" period maps in this context? This is done in order to make the distinction 
with the usual period maps which associate to a family of smooth projective complex varieties (over some 
base scheme $S$)  the (polarized) Hodge structures of its fibers, which 
then induces a map from $S$ to a quotient by a discrete group of a period domain. Let us recall three examples of classical period maps:

(1) {\bf Case of quartic surfaces.} In this case the period map is a holomorphic map of orbifolds 
\begin{equation*}
\varphi: {\it Quartics}_{2, \C}^\circ\to \big[\Gamma\backslash V(2, 19)\big].
\end{equation*}
\noindent{\rm Here ${\it Quartics}_{2, \C}^\circ$ denotes the stack parametrizing smooth quartic surfaces up to projective 
equivalence,
\begin{equation*}
\text{\it Quartics}_{2, \C}^\circ = \big[{\rm PGL}_4\backslash \mathbb P\Sym^4 (\C^4)^\circ \big]
\end{equation*}
{\rm (stack quotient in the orbifold sense).} The target space is the orbifold quotient of the space of 
oriented positive $2$-planes in a quadratic space $V$ of signature $(2, 19)$ by the automorphism group $\Gamma$ of a lattice in $V$.  

(2) {\bf Case of cubic threefolds.} In this case the period map is a holomorphic map of orbifolds 
\begin{equation*}
\varphi: {\it Cubics}_{3, \C}^\circ\to \big[\Gamma\backslash\mathfrak H_5\big].
\end{equation*}
\noindent{\rm Here ${\it Cubics}_{3, \C}^\circ$ denotes the stack parametrizing smooth cubic threefolds up to projective 
equivalence.
The target space is the orbifold quotient of the Siegel upper half space of genus $5$ by the Siegel group $\Gamma={\rm Sp}_5(\mathbb Z)$. 

(3) {\bf Case of cubic fourfolds.} In this case the period map is a holomorphic map of orbifolds 
\begin{equation*}
\varphi: {\it Cubics}_{4, \C}^\circ\to \big[\Gamma\backslash V(2, 20)\big].
\end{equation*}
\noindent{\rm Here ${\it Cubics}_{4, \C}^\circ$ denotes the stack parametrizing smooth cubic fourfolds up to projective 
equivalence.
The target space is the orbifold quotient of the space of oriented positive $2$-planes in a quadratic space 
$V$ of signature $(2, 20)$ by the automorphism group $\Gamma$ of a lattice in $V$.  

In the first case, by the Torelli theorem of Piatetskii-Shapiro/Shafarevich, the induced map $|\varphi|$ on coarse moduli spaces is an open embedding. In the second case, 
by the Torelli theorem of Clemens/Griffiths, the map $|\varphi|$ is a locally closed embedding  (it is not an open embedding since the 
source of $\varphi$ has dimension $10$, and the target has dimension $15$).   In the third case, by the Torelli 
theorem of Voisin, the map $|\varphi|$ is an open embedding.


The construction of the occult period maps is quite different, although it does use the classical period maps indirectly. 
For instance, the construction of Allcock, Carlson, Toledo attaches a certain Hodge structure to any smooth cubic 
surface which allows one to distinguish between non-isomorphic ones, even though the natural Hodge structures on the 
cohomology in the middle dimension of all cubic surfaces are isomorphic. 
Also, in one dimension higher, their construction allows them to define an {\it open embedding} of the space of cubic threefolds into 
an arithmetic quotient of the complex unit ball of dimension $10$.

Our second aim in this paper is to  identify the complements of the images of occult period maps 
with  {\it special divisors} considered in \cite{KR2}. 

The lay-out of the paper is as follows. In sections \ref{subsectionpic}, \ref{cu}  and \ref{sc} we recall 
some of the theory and notation of \cite{KR2}. In sections \ref{cubicsurfaces},  \ref{cubicthreefolds}, 
 \ref{curvesgenus3}, and \ref{curvesgenus4}, respectively, 
 we explain in turn the case of cubic surfaces,  cubic threefolds,  curves of genus $3$, and curves of genus $4$. 
 In section \ref{descent}, we explain the descent problem, and solve it in zero characteristic. In the final section, 
we make a few supplementary  remarks.

We stress that the proofs of our statements are all contained in the papers mentioned above, and that our work only consists in interpreting  these results.

We thank B.~van Geemen, D.~Huybrechts and E.~Looijenga for very helpful discussions. 
We also thank J.~Achter for keeping us informed about his progress in proving our conjecture in section \ref{descent} in some cases. Finally, we thank the referee who alerted us to a mistake concerning the stacks aspect of period maps.

\section{Moduli spaces of Picard type}\label{subsectionpic} 
Let $\kay=\Q(\sqrt{\Delta})$ be an imaginary-quadratic field with discriminant $\Delta$, ring of integers $\OK$, and a 
fixed complex embedding.   We write $\aa\mapsto{\aa^\sigma}$ for the
non-trivial automorphism of $\OK$.

For integers $n\geq 1$ and $r$, $0\leq r\leq n$, we consider the groupoid $\M = \M (n-r, r) = \M (\kay; n-r, r)$ fibered over $({\rm Sch} / \OK)$ which associates to an 
$\OK$-scheme $S$ the groupoid of triples $(A, \iota, \lambda)$. Here $A$ is an abelian scheme over $S$, 
$\lambda$ is a principal polarization, and $\iota : \OK\to\End (A)$ is a homomorphism such that
\begin{equation*}
\iota (\aa)^\ast = \iota ({\aa^\sigma})\ ,
\end{equation*}
for the Rosati involution $\ast$ corresponding to $\lambda$.   In addition, the following signature condition is imposed
\begin{equation}
{\rm char} (T, \iota (\aa)\mid\Lie A) = (T-i(\aa))^{n-r}\cdot (T-i({\aa^\sigma}))^r\ ,\quad\forall\,\aa\in\OK\ ,
\end{equation}
where $i:\OK\rightarrow \Cal O_S$ is the structure map. 

We will mostly consider the complex fiber $\M_\C = \M\times_{\Spec\OK}\Spec\C$ of $\M$. In any
case, $\M$ is a Deligne-Mumford stack and $\M_\C$ is smooth. We denote by $|\M_\C|$ the coarse moduli scheme. 

We will also have to consider the following variant,  defined  by modifying the requirement above that 
the polarization $\lambda$ be principal. Let $d>1$ be  a square free divisor of $\vert \Delta\vert$. 
Then $\M (\kay, d; n-r, r)^*=\M (\kay; n-r, r)^*$ parametrizes triples $(A, \iota, \lambda)$ as in the case 
of  $\M (\kay; n-r, r)$, except that we impose  the following condition on $\lambda$. We require first of all 
that $ \ker\lambda\subset A[d]$, so that $\OK/(d)$ acts on $\ker\lambda$. In addition, we require 
that this action factor  through the quotient ring $\prod_{p\mid d}\F_p$ of $\OK/(d)$, and that $\lambda$ 
be of degree $d^{n-1}$, if $n$ is odd, resp.\ $d^{n-2}$, if $n$ is even.  In the notation introduced in section 13 of \cite{KR2}, we have 
$\M (\kay, d; n-r, r)^*=\M (\kay, {\bf t}; n-r, r)^{*, {\rm naive}}$, where  the function ${\bf t}$ on the set of 
primes $p$ with  $p\mid \Delta$  assigns to $p$ the  integer $2[(n-1)/2]$ if $p\mid d$, and $0$ 
if $p\nmid d$. Note that if $\kay$ is the Gaussian field $\kay=\Q(\sqrt{-1})$, then necessarily $d=2$; if $\kay$ is the Eisenstein field  $\kay=\Q(\sqrt{-3})$, then $d=3$. We denote by $|\M_\C^*|$ the corresponding coarse moduli scheme. 

\section{Complex uniformization}\label{cu} 
Let us recall from \cite{KR2} the complex uniformization of $\M (\kay; n-1, 1)(\C)$ in the special case that $\kay$ has class number one. 
 For $n>2$, let $(V, (\ ,\  ))$ be a hermitian vector space over $\kay$ of signature
$(n-1, 1)$ which contains a self-dual $\OK$-lattice $L$. By the class number hypothesis, $V$ is unique up to isomorphism.  
When $n$ is odd,  or when $n$ is even and $\Delta$ is odd, the lattice $L$ is also unique up to isomorphism. We assume 
that one of these conditions is satisfied. 
 Let $\mathcal D$ be the space of negative lines in the
$\C$-vector space $(V_\R, \I_0)$, where the complex structure $\I_0$ is defined in terms of the discriminant 
of $\kay$, as $\I_0 = \sqrt{\Delta}/{|\sqrt{\Delta}|}$.
Let $\Gamma$  be the isometry group of $L$. 
Then the complex uniformization is the isomorphism of orbifolds, 
\begin{equation*}
\M (\kay; n-1, 1)(\C)\simeq [\Gamma\backslash \mathcal D] .
\end{equation*} 
There is an obvious $*$-variant of this uniformization, which gives 
\begin{equation*}
\M (\kay; n-1, 1)^*(\C)\simeq [\Gamma^*\backslash \mathcal D] ,
\end{equation*} 
where $\Gamma^*$ is  the automorphism group  of the  ({\it parahoric}) lattice $L^*$ corresponding to the $*$-moduli problem.  
The lattice $L^*$  is  uniquely determined up to isomorphism  by the condition 
that there is a chain of inclusions of $\OK$-lattices 
$L^*\subset (L^*)^\vee\subset (\sqrt{d})^{-1}L^*$, with quotient $(L^*)^\vee /L^*$ of dimension $n-1$ if $n$ is odd and $n-2$ if $n$ is even, when localized 
at any prime ideal $\mathfrak p$ dividing $d$.  Here, for an $\OK$-lattice $M$ in $V$, 
we write 
$$M^\vee = \{ \ x\in V\ \mid \ h(x,L) \subset \OK\ \}$$
for the dual lattice.

\section{ Special cycles (KM-cycles)}\label{sc}
 We continue to assume that the class number of $\kay$ is one, and recall from \cite{KR2} the definition of special cycles over 
 $\C$. Let $(E, \iota_0)$ be an elliptic curve with $CM$ by $\OK$ over $\C$, which we fix in what follows. 
 Note that, due to our class number hypothesis, $(E, \iota_0)$
is unique up to isomorphism. We denote its canonical  principal polarization by $\lambda_0$. 
For any $\C$-scheme $S$, and $(A, \iota, \lambda)\in\M (\kay; n-1, 1) (S)$, let
\begin{equation*}
V' (A, E) = \Hom_{\OK} (E_S, A)\ ,
\end{equation*}
where $E_S = E\times_\C S$ is the constant elliptic scheme over $S$ defined by $E$. 
Then $V' (A, E)$ is a projective $\OK$-module of finite rank  with a positive definite $\OK$-valued hermitian form given by
\begin{equation*}
h' (x, y) = \lambda_0^{-1}\circ y^\vee\circ\lambda\circ x\in\End_{\OK} (E_S) = \OK\ .
\end{equation*}
For a positive integer $t$, we define the DM-stack\footnote{This notation 
differs from that in \cite{KR2}, in that here the special cycles are defined over $\C$, and are considered as lying over $\M(\smallkay; n-1, 1)_\C$.}
$\ZZ(t)$ by 
\begin{equation*}
\ZZ(t)(S) = \{(A, \iota, \lambda; x)\mid (A, \iota, \lambda) \in \M (\kay; n-1, 1)(S), \  x\in V' (A, E), \ \ h' (x, x) = t\}\ .
\end{equation*}
Then $\ZZ(t)$ maps by a finite unramified morphism to $\M (\kay; n-1, 1)_\C$,  and its image is a divisor in the sense that, locally 
for the \'etale topology,  it is defined by a non-zero equation. 

The cycles $\ZZ(t)$ also admit a complex uniformization. More precisely, under the assumption of the triviality of the class group of $\kay$, we have
\begin{equation*}
\ZZ(t)(\C) \simeq \Big[\Gamma\backslash\Big(\coprod_{\substack{x\in L\\ h(x, x)=t}}\mathcal D_x\Big)\Big] ,
\end{equation*} 
where $\mathcal D_x$ is the set of lines in $\mathcal D$ which are perpendicular to $x$. 

Again, there is a $*$-variant of these definitions and a corresponding DM-stack $\ZZ(t)^*$ above $\M(\kay; n-1, 1)^*$.

\section{ Cubic surfaces} \label{cubicsurfaces}
In this paper we  consider  four occult  period mappings. We start with the case of cubic surfaces, following Allcock, Carlson, Toledo \cite{ACT1}, comp.\ also \cite{B}. 
As explained in the introduction, in these sources, the results are formulated in terms of arithmetic ball quotients;
here we use the complex uniformization of the previous two sections to
express these results in terms of moduli spaces of Picard type.

Let $S\subset\mathbb P^3$ be a smooth cubic surface. Let $V$ be a cyclic covering of degree $3$ of $\mathbb P^3$, 
ramified along $S$. Explicitly, if $S$ is defined by the homogeneous equation of degree $3$ in $4$
variables
\begin{equation*}
F (X_0, \ldots , X_3) = 0\ ,
\end{equation*}
then $V$ is defined by the homogeneous equation of degree $3$
in $5$ variables,
\begin{equation*}
X_4^3 - F(X_0, \ldots , X_3) = 0\ .
\end{equation*}

Let $\kay = \Q (\o)$, $\o = e^{2\pi i /3}$. Then the obvious $\mu_3$-action on $V$ determines an action of $\OK=\Z[\o]$ on $H^3 (V, \Z)$. 
For  the (alternating) cup product pairing $\langle \ , \ \rangle$,
\begin{equation*}
\langle \o x, \o y \rangle = \langle  x,  y \rangle  ,
\end{equation*}
which implies that
\begin{equation*}
\langle \aa x,  y \rangle = \langle  x,  {\aa^\sigma} y \rangle ,\quad \forall\,\aa \in \OK . 
\end{equation*}
Hence there is a unique $\OK$-valued hermitian form $h$ on $H^3 (V, \Z)$ such that 
\begin{equation}\label{altherm}
\langle  x,  y \rangle = {\rm tr} \big(\frac{1}{\sqrt \Delta} h( x,  y)\big) ,
\end{equation}
where the discriminant $\Delta$   of $\kay$ is equal to $-3$ in the case at hand. Explicitly,
\begin{equation}
h(x, y)=\frac{1}{2}\big(\langle \sqrt{\Delta} x,  y \rangle +\langle  x,  y \rangle \sqrt{\Delta}\big) .  
\end{equation} 

Furthermore, an $\OK$-lattice is self-dual wrt $\langle\ ,\ \rangle$ if and only if it is self-dual wrt $h(\ ,\ )$. 
\smallskip

{\bf Fact:} {\it $H^3 (V, \Z)$ is a self-dual hermitian $\OK$-module of signature $(4, 1)$.}
\smallskip

As noted above, such a lattice is unique up to isomorphism.

Let
\begin{equation*}
A = A(V) = H^3 (V, \Z)\backslash H^3 (V, \C) / H^{2, 1} (V)
\end{equation*}
be the intermediate Jacobian of $V$. Then $A$ is an abelian variety of dimension $5$ which is principally
polarized by the intersection form. Since the association $V\mapsto (A(V), \lambda)$ is functorial, we obtain
an action $\iota$ of $\OK$ on $A(V)$.
\begin{theo}\label{cubicsurf}
(i) The object $(A, \iota, \lambda )$ lies in $\M (\kay; 4,1) (\C)$.
\smallskip

\noindent (ii)This construction is functorial and compatible with families,    
and defines a morphism of
DM-stacks,
\begin{equation*}
\varphi: {\it Cubics}_{2, \C}^\circ\to\M (\kay; 4,1)_\C\ .
\end{equation*}
{Here ${\it Cubics}_{2, \C}^\circ$ denotes the stack parametrizing smooth cubic surfaces up to projective 
equivalence,
\begin{equation*}
\text{\it Cubics}_{2, \C}^\circ = [ {\rm PGL}_4\backslash \mathbb P\Sym^3 (\C^4)^\circ ]
\end{equation*}
{\rm [stack quotient in the orbifold sense].}}
\smallskip

\noindent (iii) The induced morphism on coarse moduli spaces $|\varphi|: |{\it Cubics}_{2, \C}^\circ|\to|\M (\kay; 4,1)_\C|$ is an open embedding. Its image is the complement of the image of the 
KM-cycle $\ZZ(1)$ in $|\M (\kay; 4,1)_\C|$.
\end{theo}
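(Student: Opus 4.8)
The plan is to verify the three clauses in turn, treating (i) as a pointwise Hodge-theoretic computation, (ii) as its relative upgrade, and (iii) by combining a Torelli theorem with the complex uniformizations of Sections~\ref{cu} and~\ref{sc}. For (i) I would check the three defining conditions of $\M(\kay;4,1)(\C)$. That $A$ is a $5$-dimensional abelian variety with the principal polarization $\lambda$ induced by the cup product and the $\OK$-action $\iota$ induced by the deck transformation $\o$ is already recorded; and since $V$ is a cubic threefold one has $H^{3,0}(V)=0$, so $F^2H^3(V)=H^{2,1}(V)$ and
\begin{equation*}
\Lie A\;\cong\;H^3(V,\C)/H^{2,1}(V)\;\cong\;H^{1,2}(V).
\end{equation*}
The Rosati-compatibility $\iota(\aa)^\ast=\iota(\aa^\sigma)$ is immediate from the adjunction $\langle\aa x,y\rangle=\langle x,\aa^\sigma y\rangle$ recorded above, since $\lambda$ is induced by $\langle\ ,\ \rangle$. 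The one substantive point is the signature condition. Here $\o$ acts on $H^3(V,\C)$ with the two eigenvalues $i(\o)$ and $i(\o^\sigma)$, each on a $5$-dimensional eigenspace (there is no invariant part, as $H^3(\mathbb P^3)=0$), and the two eigenspaces are exchanged by complex conjugation; computing the eigenspace Hodge numbers of the cyclic triple cover then gives $\dim H^{1,2}(V)_{i(\o)}=4$ and $\dim H^{1,2}(V)_{i(\o^\sigma)}=1$. Thus $\iota(\o)$ acts on $\Lie A$ with characteristic polynomial $(T-i(\o))^{4}(T-i(\o^\sigma))$, which is precisely the signature condition for $(4,1)$, and the infinitesimal counterpart of the Fact that $H^3(V,\Z)$ is self-dual hermitian of signature $(4,1)$.

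For (ii) I would run the same construction in families. Over a $\C$-scheme $S$ a family of smooth cubic surfaces produces, via the explicit equation, a smooth family $f\colon\mathcal V\to S$ of cubic threefolds; the local system $R^3f_\ast\Z$ underlies a polarized variation of Hodge structure of weight $3$ with $H^{3,0}=0$, so $F^2$ is a subbundle and the relative intermediate Jacobian $\mathcal A=R^3f_\ast\Z\backslash\big(R^3f_\ast\mathcal O/F^2\big)$ is an abelian scheme over $S$, principally polarized by the relative cup product and carrying the fibrewise $\mu_3$-action as an $\OK$-action $\iota$. The conditions verified in (i) hold fibrewise, hence over $S$, so $(\mathcal A,\iota,\lambda)\in\M(\kay;4,1)(S)$, and naturality in $S$ is clear; this defines $\varphi$ on objects. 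To obtain a $1$-morphism of DM-stacks one must in addition send isomorphisms to isomorphisms compatibly: a projective equivalence of cubic surfaces lifts canonically to an isomorphism of the covers $V$, hence to an isomorphism of intermediate Jacobians respecting $(\iota,\lambda)$, and this assignment respects composition. This is exactly the stacky point flagged by the referee, and what is needed for $\varphi$ to be a morphism of stacks and not merely of coarse spaces.

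For (iii) I would argue as follows. By the uniformization of Section~\ref{cu} the target is $[\Gamma\backslash\mathcal D]$ with $\dim\mathcal D=4$, while $\dim{\it Cubics}_{2,\C}^\circ=\dim\mathbb P\Sym^3(\C^4)-\dim{\rm PGL}_4=19-15=4$, so source and target have equal dimension. The Torelli theorem of Allcock--Carlson--Toledo \cite{ACT1} says precisely that the associated Hodge structure, equivalently the period point in $[\Gamma\backslash\mathcal D]$, determines the cubic surface up to projective equivalence, so $|\varphi|$ is injective; infinitesimal Torelli shows the differential of $\varphi$ is an isomorphism, so $\varphi$ is \'etale. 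An injective \'etale morphism between smooth coarse spaces of the same dimension is an open embedding. To identify the image I would then invoke the uniformization $\ZZ(1)(\C)\simeq\big[\Gamma\backslash\coprod_{h(x,x)=1}\mathcal D_x\big]$ of Section~\ref{sc}: in the Allcock--Carlson--Toledo analysis the period map extends across the locus of nodal cubic surfaces, and the image of the smooth locus is exactly the complement in $\mathcal D$ of the union of the hyperplane sections $\mathcal D_x$ attached to the norm-$1$ vectors $x$. Hence the image of $|\varphi|$ is $\big[\Gamma\backslash(\mathcal D\setminus\bigcup_{h(x,x)=1}\mathcal D_x)\big]$, the complement of the image of $\ZZ(1)$.

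The step I expect to be the main obstacle is this last identification in (iii): matching the \emph{geometric} degeneration locus, namely cubic surfaces acquiring a node, with the \emph{arithmetic} special divisor $\ZZ(1)$ cut out by the norm-$1$ vectors. This requires the detailed boundary analysis of Allcock--Carlson--Toledo, showing that the vanishing cycle of a node is a norm-$1$ vector orthogonal to the period point and that no further walls intervene. By contrast the signature computation in (i) and the family and stack bookkeeping in (ii) are essentially routine once the cited eigenspace Hodge numbers and Torelli theorem are granted.
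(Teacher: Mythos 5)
Your outline for (i) and the broad strategy of deferring to Allcock--Carlson--Toledo for the Torelli and image statements match the paper, but there are two genuine gaps in your treatment of (ii). First, algebraicity: the relative intermediate Jacobian $R^3f_*\Z\backslash(R^3f_*\mathcal O/F^2)$ is a priori only a complex-\emph{analytic} family of abelian varieties over $S^{\rm an}$, so calling it ``an abelian scheme over $S$'' asserts exactly what needs to be proved; the paper closes this by invoking Borel's theorem \cite{Bo} that an analytic family of abelian varieties over a $\C$-scheme is automatically algebraic, and without that step $\varphi$ is only a morphism of analytic stacks. Second, your claim that a projective equivalence of cubic surfaces ``lifts canonically'' to the covers is false: carrying $F$ to $\lambda F'$ forces a choice of cube root of $\lambda$ in the equation $X_4^3=F$, so the lift is well defined only up to the $\mu_3$-deck transformation, which acts on $(A,\iota,\lambda)$ through $\iota(\OK^\times)$ by non-trivial automorphisms. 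This is precisely the stacks issue the referee flagged and the reason the paper's subsequent Remark only records the isomorphism ${\rm Aut}(S)\simeq{\rm Aut}(A(V),\iota,\lambda)/\OK^\times$ rather than endorsing the orbifold-immersion statement of \cite{ACT1}; your phrasing makes the one delicate point of (ii) disappear by fiat.

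For (iii) your route is workable but packaged differently from the paper's, and the comparison is instructive. You fold the inclusion ${\rm Im}(|\varphi|)\subset|\M(\kay;4,1)_\C|\setminus\ZZ(1)$ into the nodal-degeneration analysis; the paper instead gives a direct moduli-theoretic argument: by Clemens--Griffiths the intermediate Jacobian of a cubic threefold is simple as a polarized abelian variety, whereas a point of $\ZZ(1)$ carries a norm-one element of $\Hom_{\OK}(E,A)$ and hence splits off an elliptic curve, so the image must avoid $\ZZ(1)$. For the reverse inclusion (that $\ZZ(1)$ is the \emph{whole} complement), the paper's mechanism is that $|\varphi|$ extends to an isomorphism from the space of stable cubics onto $|\M(\kay;4,1)_\C|$ with irreducible boundary divisor (\cite{B}, Prop.~6.7 and 8.2), which is the precise form of your ``no further walls intervene''; you should say this explicitly, since injectivity plus equality of dimensions does not by itself identify the image.
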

\begin{proof}
We only comment  on  the assertions in (ii) and (iii). In (ii), the compatibility with families is always true of Griffiths'  
intermediate jacobians (which however are abelian varieties only when the Hodge structure is of type 
$(m+1, m)+(m, m+1)$). This constructs $\varphi$ as a complex-analytic morphism. 
The algebraicity of $\varphi$ then follows from Borel's theorem that any analytic family of abelian varieties over a $\C$-scheme is automatically algebraic \cite{Bo}. 
The fact that the image is contained in the complement of $\ZZ(1)$ is true because, by the Clemens-Griffiths 
theory, intermediate Jacobians of cubic threefolds  are simple as polarized abelian varieties, whereas,  over $\ZZ(1)$ the 
polarized abelian varieties split off an elliptic curve. However, the fact that $\ZZ(1)$ makes up the whole complement 
is surprising and results from the fact that the morphism $\varphi$ extends to an isomorphism from a 
partial compactification $|\text{\it Cubics}_{2, \C}^{\rm s}|$ of $|\text{\it Cubics}_{2, \C}^\circ|$ (obtained by 
adding {\it stable} cubics) to  $|\M (\kay; 4,1)_\C|$, such that the complement of 
$|\text{\it Cubics}_{2, \C}^\circ|$ in $|\text{\it Cubics}_{2, \C}^{\rm s}|$  is an irreducible divisor, cf.\ \cite{B}, Prop.\ 6.7, Prop.\ 8.2. 
\end{proof}
\begin{rem} {\rm Let us comment on the stacks aspect of Theorem \ref{cubicsurf}. Any automorphism of $S$ is induced by an automorphism of $\mathbb P^3$, which in turn induces an automorphism of $V$. We therefore obtain a homomorphism
$ {\rm Aut}(S)\to {\rm Aut}(A(V), \iota, \lambda).$
The statement of 
\cite{ACT1}, Thm.~2.20. implies that this homomorphism induces an isomorphism
 \begin{equation}\label{stackiso}
  {\rm Aut}(S)\overset{\sim}{\longrightarrow} {\rm Aut}(A(V), \iota, \lambda)/\OK^\times\, ,
\end{equation} 
where the units $\OK^\times\simeq\mu_6$ act via $\iota$  on $A(V)$. Indeed, in loc. cit. it is asserted that $\varphi$ is an open immersion of orbifolds ${\it Cubics}_{2, \C}^\circ\to [P\Gamma\backslash \mathcal D]$, where $P\Gamma=\Gamma/\OK^\times$; however, we were not able to follow the argument. Note that the orbifold  $[P\Gamma\backslash \mathcal D]$ is different from $[\Gamma\backslash \mathcal D]$, which occurs in \S 3. 
}
\end{rem}
\section{ Cubic threefolds}\label{cubicthreefolds} 
Our next example concerns cubic threefolds, following Allcock, Carlson, Toledo  \cite{ACT2} and Looijenga, Swierstra \cite{LS1}.

Let $T\subset\mathbb P^4$ be a cubic threefold. Let $V$ be the cyclic covering of degree $3$ of $\mathbb P^4$, 
ramified in $T$. Then $V$ is a cubic hypersurface in $\mathbb P^5$ and we define the primitive cohomology as 
\begin{equation}
L=H_0^4 (V, \Z) = \{x\in H^4 (V, \Z)\mid ( x, \rho) = 0\}\ ,
\end{equation}
where $\rho$ is the square of the hyperplane section class. Note that  ${\rm rk}_{\Z} L = 22$. 
Again, let $\kay =\Q (\o)$, with $\o= e^{2\pi i/3}$, so that  $L$
becomes an $\OK$-module. Now the cup-product $(\ , \  )$ on $H^4 (V, \Z)$  is a perfect {\it symmetric} 
pairing satisfying $(a x, y)= (x, {a^\sigma} y)$ for $a \in\OK$. It induces on
$L$ a symmetric bilinear form $(\  ,\  )$ of discriminant $3$. We wish to define an {\it alternating} pairing 
$\langle \ , \ \rangle$ on $L$ satisfying $\langle \aa x, y\rangle=\langle x, {\aa^\sigma} y\rangle$ for $\aa\in \OK$. 
We do this by giving the associated $\OK$-valued hermitian pairing $h(\ , \ )$, 
in the sense of (\ref{altherm}) defined by 
\begin{equation}\label{hermit1}
h(x, y) = \frac{3}{2}  \big(( x, y) +  ( x, \sqrt{\Delta} y) \frac{1}{\sqrt{\Delta}}\big). 
\end{equation}
Here the factor $3/2$ is used instead of $1/2$  to have better integrality properties. Set $\pi=\sqrt{\Delta}$.

{\bf Fact:} {\it For the pairing \eqref{hermit1},  $L^{\vee}$ contains $\pi^{-1}L$ with 
$L^\vee/\pi^{-1}L\simeq \Z/3\Z$.} \hfb 
For this result, see \cite{ACT2}, Theorem\ 2.6 and its proof, as well as \cite{LS1}, the passage below (2.1).

Now consider the eigenspace decomposition of $H_0^4 (V, \C)$ under $\kay\otimes\C = \C\oplus\C$.

{\bf Fact:} {\it The Hodge structure of $H_0^4 (V, \R)$ is of type
\begin{equation*}
H_0^4 (V, \C) = H^{3, 1}\oplus H_0^{2, 2}\oplus H^{1, 3}\ ,
\end{equation*}
with $\dim H^{3, 1} = \dim H^{1, 3} = 1$. Furthermore, the only nontrivial eigenspaces of the generator $\o$ of $\mu_3$ are
\begin{equation*}
\begin{aligned}
H_0^4 (V, \C)_\o & = & H^{3, 1}\oplus (H_0^{2, 2})_\o\ , & \text{ with}\  \dim (H_0^{2, 2})_\o = 10\\
H_0^4 (V, \C)_{\overline{\o}} & = & (H_0^{2, 2})_{\overline{\o}}\oplus H^{1, 3}\ , & \text{ with}\ \dim (H_0^{2, 2})_{\overline{\o}} = 10\ ,
\end{aligned}
\end{equation*}}
see \cite{ACT2}, \S 2, resp.\ \cite{LS1} \S 4.

\smallskip

Now set $\Lambda=\pi L^\vee$. Then we have the chain of inclusions of $\OK$-lattices
\begin{equation*}
\Lambda\subset \Lambda^\vee\subset \pi^{-1}\Lambda\  ,
\end{equation*}
where the quotient $\Lambda^\vee/\Lambda$ is isomorphic to $(\Z/3\Z)^{10}$, and where $\pi^{-1}\Lambda/\Lambda^\vee$ is isomorphic to $\Z/3\Z$. 
Let
\begin{equation*}
A = \Lambda\backslash H_0^4 (V, \C) / H^-\ ,
\end{equation*}
where
\begin{equation*}
H^- = H^{3, 1}\oplus (H_0^{2, 2})_{\overline{\o}}\ .
\end{equation*}
Note that the map $\Lambda\to H_0^4 (V, \C) / H^-$ is an $\OK$-linear injection, hence $A$ is 
a complex torus. In fact, the hermitian form $h$ and its associated alternating form $\langle \ , \ \rangle$ define a polarization $\lambda$ on $A$. Hence $A$ is an abelian variety of dimension $11$, with an action of $\OK$ and a polarization of degree
$3^{10}$. In fact, we obtain in this way an object $(A, \iota, \lambda )$ of $\M (\kay; 10, 1)^\ast (\C)$ (see section \ref{subsectionpic} for the definition of the $*$-variants of our moduli stacks).
\begin{theo}\label{thmcubic3}

\noindent (i) The construction which associates to a smooth cubic $T$ in $\mathbb P^4$ the object $(A, \iota, \lambda )$
of $\M (\kay; 10, 1)^\ast(\C)$ is functorial and compatible with families,  and defines a morphism of DM-stacks,
\begin{equation*}
\varphi : \text{Cubics}_{3, \C}^\circ\to\M (\kay; 10,1)_\C^\ast\ .
\end{equation*}

\noindent (ii) The induced morphism on coarse moduli spaces $|\varphi| : |\text{Cubics}_{3, \C}^\circ|\to|\M (\kay; 10,1)_\C^\ast| $ is an open embedding. Its image is the complement of the  image of the KM-cycle $\ZZ (3)^*$ in $|\M (\kay; 10,1)_\C^\ast|$.
\end{theo}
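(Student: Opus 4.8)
The plan is to proceed exactly as in the proof of Theorem \ref{cubicsurf}, since the two statements are structurally parallel: part (i) is a functoriality/algebraicity assertion, and part (ii) is an open-embedding statement whose proof is borrowed from the cited sources. For part (i), I would first verify the pointwise claim that $(A,\iota,\lambda)$ genuinely lies in $\M(\kay;10,1)^\ast(\C)$ — that is, that the $*$-moduli conditions of section \ref{subsectionpic} are met. The signature condition on $\Lie A$ follows from the Hodge-type computation recorded in the second \textbf{Fact}: since $H^- = H^{3,1}\oplus (H_0^{2,2})_{\overline\o}$ and the $\o$-eigenspace $H_0^4(V,\C)_\o = H^{3,1}\oplus (H_0^{2,2})_\o$ has dimension $11$ while the anti-holomorphic part contributing to $\Lie A$ has the $(10,1)$ splitting, one reads off $\text{char}(T,\iota(\aa)\mid\Lie A)=(T-i(\aa))^{10}(T-i(\aa^\sigma))$. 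The polarization condition — that $\lambda$ has degree $3^{10}$ with $\ker\lambda\subset A[\sqrt{-3}]$ and the correct $\OK/(3)$-action — is exactly the content of the chain $\Lambda\subset\Lambda^\vee\subset\pi^{-1}\Lambda$ with $\Lambda^\vee/\Lambda\simeq(\Z/3\Z)^{10}$ displayed just before the theorem, matched against the $d=3$ instance of the $*$-problem ($n=11$ odd, so $\mathbf t(p)=2[(n-1)/2]=10$). Having checked one point, compatibility with families is again automatic for Griffiths intermediate Jacobians, and the algebraicity of the resulting analytic morphism $\varphi$ follows from Borel's theorem \cite{Bo}, as in Theorem \ref{cubicsurf}(ii).

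For part (ii), the open-embedding statement and the identification of the image rest on the Torelli theorem and the boundary analysis of Allcock-Carlson-Toledo \cite{ACT2} and Looijenga-Swierstra \cite{LS1}. I would argue that $\varphi$ is injective on coarse spaces via the global Torelli theorem for cubic threefolds (the generic Torelli / global Torelli results in the cited papers show the period point determines $T$), and that the image lies in the complement of $\ZZ(3)^*$ because over $\ZZ(3)^*$ the polarized abelian variety acquires a sub-object on which the hermitian form takes the value $3=h'(x,x)$, forcing a splitting incompatible with the irreducibility of the Hodge structure of a smooth cubic threefold — precisely the analogue of the simplicity argument used in Theorem \ref{cubicsurf}. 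That the image is \emph{exactly} this complement then follows from the fact, established in \cite{ACT2, LS1}, that $\varphi$ extends to an isomorphism of a stable partial compactification of $|\text{Cubics}_{3,\C}^\circ|$ onto $|\M(\kay;10,1)_\C^\ast|$, with the added stable-cubic locus mapping onto the divisor $\ZZ(3)^*$.

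The main obstacle I anticipate is the precise matching of the $*$-level structure: one must confirm that the discriminant-form data recorded in the first \textbf{Fact} ($L^\vee\supset\pi^{-1}L$ with $L^\vee/\pi^{-1}L\simeq\Z/3\Z$) and the rescaling by $\pi$ to pass from $L$ to $\Lambda=\pi L^\vee$ reproduce \emph{exactly} the parahoric lattice $L^*$ of section \ref{cu}, including the condition that $\OK/(3)$ act on $\ker\lambda$ through $\prod_{p\mid 3}\F_p=\F_3$. This is a bookkeeping exercise with the hermitian form \eqref{hermit1} and its integral normalization, but it is where the argument could genuinely go wrong if the self-duality/parahoric conventions in \cite{KR2} and in \cite{ACT2, LS1} are not aligned. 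Since, as the authors stress, the underlying geometric content already resides in the cited works, the task here is interpretive: I would confine the new verification to this lattice-theoretic identification and cite \cite{ACT2, LS1} for the Torelli and boundary statements, mirroring the brief proof given for Theorem \ref{cubicsurf}.
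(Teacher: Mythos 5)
Your overall strategy matches the paper's: part (i) is a verification that the construction of section \ref{cubicthreefolds} lands in the $*$-moduli problem and varies holomorphically in families, and part (ii) is delegated to \cite{ACT2}, Thm.~1.1, resp.\ \cite{LS1}, Thm.~3.1, which is exactly what the paper does. Your lattice-theoretic matching of $\Lambda=\pi L^\vee$ against the parahoric $*$-level structure is the right bookkeeping, consistent with the chain $\Lambda\subset\Lambda^\vee\subset\pi^{-1}\Lambda$ recorded before the theorem.

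There is one step where your justification is wrong even though the conclusion stands. You write that ``compatibility with families is again automatic for Griffiths intermediate Jacobians,'' importing the argument from the cubic-surface case. But the abelian variety $A$ attached to a cubic threefold $T$ is \emph{not} a Griffiths intermediate Jacobian: the relevant Hodge structure $H_0^4(V,\C)=H^{3,1}\oplus H_0^{2,2}\oplus H^{1,3}$ is not of type $(m+1,m)+(m,m+1)$, which is precisely the condition the paper flags (in the proof of Theorem \ref{cubicsurf}) for Griffiths' construction to yield an abelian variety. Here $A=\Lambda\backslash H_0^4(V,\C)/H^-$ with $H^-=H^{3,1}\oplus(H_0^{2,2})_{\overline{\o}}$ defined via the $\mu_3$-eigenspace decomposition, so holomorphic dependence on the family is not automatic from Griffiths' theory; the paper's (brief) argument is instead that the $\mu_3$-eigenspaces and the Hodge filtration each vary holomorphically, hence so does $H^-$. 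Your own Hodge-type computation earlier in the same paragraph shows you have the correct construction in hand, so this is a misattribution rather than a fatal gap, but as written the cited justification does not apply to this case.
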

\begin{proof}
 The compatibility with families is due to the fact 
that the eigenspaces for the $\mu_3$-action and the Hodge filtration both vary in a 
holomorphic way. The point (ii)  is \cite{ACT2}, Thm.\ 1.1, resp.\ \cite{LS1}, Thm.\ 3.1.  
\end{proof}
\begin{rem}{\rm  The stack aspect is not treated in these sources. However, it seems reasonable to conjecture that the analogue of \eqref{stackiso} is also true in this case, i.e., that there is an isomorphism 
 \begin{equation}\label{stackiso2}
  {\rm Aut}(T)\overset{\sim}{\longrightarrow} {\rm Aut}(A, \iota, \lambda)/\OK^\times\, ,
\end{equation} 
where $(A, \iota, \lambda)$ is the object of $\M (\kay; 10,1)_\C^\ast$ attached to $T$. }\end{rem}
\begin{rem}
{\rm The construction of the rational Hodge structure $H^1 (A, \Q)$ from $H_0^4 (V, \Q)$ is a very special case
of a general construction due to van Geemen \cite{G}. More precisely, it arises (up to Tate twist) as the  {\it inverse
half-twist} in the sense of loc.\ cit.\  of the Hodge structure $H_0^4 (V, \Q)$ with complex 
multiplication by $\kay$. The {\it half twist} construction attaches to a rational Hodge structure 
$V$ of weight $w$ with complex multiplication  by a CM-field $\kay$ a rational Hodge 
structure of weight $w+1$. More precisely, if $\Sigma$ is a fixed half system of complex 
embeddings of $\kay$, then van Geemen defines a new Hodge structure on $V$ by setting
\begin{equation*}
V_{\rm new}^{r, s}=V_{\Sigma}^{r-1, s}\oplus V_{\overline{\Sigma}}^{r, s-1} , 
\end{equation*}  
where $V_{\Sigma}$, resp.\ $V_{\overline{\Sigma}}$ denotes the sum of the 
eigenspaces for the $\kay$-action corresponding to the complex embeddings in ${\Sigma}$, resp. in ${\overline{\Sigma}}$. }
\end{rem}

\section{Curves of genus three} \label{curvesgenus3}

Our third example concerns the moduli space of curves of genus $3$ following Kondo \cite{K1}. 

Let $C$ be a non-hyperelliptic smooth projective curve of genus 3. The canonical system embeds $C$ as a
quartic curve in $\mathbb P^2$. Let $X(C)$ be the $\mu_4$-covering of $\mathbb P^2$ ramified in $C$. Then the quartic $X(C)\subset \mathbb P^3$ is a
$K3$-surface with an automorphism $\tau$ of order $4$ and hence an action of $\mu_4$. Let
\begin{equation*}
L= \{x\in H^2 (X (C), \Z)\mid\tau^2 (x) = -x\}\ .
\end{equation*}
Let $\kay = \Q (i)$ be the Gaussian field.

\smallskip

{\bf Fact:} {\it $L$ is a free $\Z$-module of rank $14$. The restriction $(\ , \ )$ of the symmetric
cup product pairing to $L$ has discriminant $2^8$; more precisely, for the dual lattice $L^*$ for the symmetric pairing, 
\begin{equation*}
L^* / L\cong (\Z/2)^8\ ,
\end{equation*}}
see \cite{K1}, top of p.\ 222.

\smallskip

Now consider the eigenspace decomposition of $L_{ \C}=L\otimes\C$ under $\kay\otimes\C = \C\oplus\C$, where $i\otimes 1$ acts via $\tau$.

\smallskip

{\bf Fact:} {\it The induced Hodge structure on $L_{\C}$ is of type
\begin{equation*}
L_{\C} = L^{2,0}\oplus L^{1,1}\oplus L^{0,2}\ ,
\end{equation*}
with $\dim L^{2,0} = \dim L^{0,2} = 1$. Furthermore the only nontrivial eigenspaces of $\tau$ are
\begin{equation*}
\begin{aligned}
(L_{\C})_i & = & L^{2,0}\oplus (L^{1,1})_i\ , & \text{ with}\ \dim (L^{1,1})_i = 6\\
(L_{\C})_{-i} & = & (L^{1,1})_{-i}\oplus L^{0,2}\ , & \text{ with}\ \dim (L^{1,1})_{-i} = 6\ .
\end{aligned}
\end{equation*}}

\smallskip

We define an $\OK$-valued  hermitian pairing $h$ on $L_\Q$ by setting 
\begin{equation}
h(x, y)= (x, y) +(x, \tau y)\ i\  .
\end{equation}
Then it is easy to see that the dual lattice $L^\vee$ of $L$ for the hermitian form $h$ is the same as the dual lattice $L^*$ for the symmetric form. 

Now set $\Lambda=\pi L^\vee$, where $\pi=1+ i$. Then we obtain a chain of inclusions of $\OK$-lattices
\begin{equation*}
\Lambda\subset \Lambda^\vee\subset \pi^{-1}\Lambda\  ,
\end{equation*}
where the quotient $\Lambda^\vee/\Lambda$ is isomorphic to $(\Z/2\Z)^{6}$, and where $\pi^{-1}\Lambda/\Lambda^\vee$ is isomorphic to $\Z/2\Z$.

Let
\begin{equation*}
A = \Lambda\backslash L_{\C} / L^-\ ,
\end{equation*}
where
\begin{equation*}
L^-= L^{2,0}\oplus (L^{1,1})_{-i}\ .
\end{equation*}
Note that the map $\Lambda\to L_{\C} / L^-$ is a $\OK$-linear injection, hence
$A$ is a complex torus. In fact, the hermitian form $h$ and its associated 
alternating form $\langle \ , \ \rangle$ define a polarization $\lambda$ 
on $A$. Hence $A$ is an abelian variety of dimension $7$, with an action of $\OK$ and a polarization of degree
$2^{6}$. In fact, we obtain in this way an object $(A, \iota, \lambda )$ 
of $\M (\kay; 6, 1)^\ast (\C)$. 
Now \cite{K1}, Thm.\ 2.5  implies the following theorem. 

\begin{theo}

\noindent (i) The construction which asssociates to a non-hyperelliptic curve of genus $3$ the object $(A, \iota, \lambda )$
of $\M (\kay; 6,1)^\ast(\C)$ is functorial and compatible with families,  and defines a morphism of DM-stacks,
\begin{equation*}
\varphi : \N_{3, \C}^\circ\to\M (k; 6,1)^\ast_\C\ .
\end{equation*}
{ Here $\N_{3,\C}^\circ$ denotes the stack of smooth non-hyperelliptic curves of genus $3$, i.e., of smooth non-hyperelliptic quartics in $\mathbb P^2$ up to projective equivalence.}
\smallskip

\noindent (ii) The induced morphism on coarse moduli schemes $|\varphi| : |\N_{3, \C}^\circ|\to|\M (k; 6,1)^\ast_\C| $ is an open embedding. Its image is the complement of the image of the KM-cycle $\ZZ (2)^*$ in $|\M (k; 6,1)^\ast_\C|$.\qed
\end{theo}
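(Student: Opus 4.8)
The plan is to follow the pattern of the proofs of Theorems~\ref{cubicsurf} and~\ref{thmcubic3}, reducing the two assertions to Kondo's analysis in \cite{K1} by means of the complex uniformizations of \S\ref{cu} and \S\ref{sc}. For part~(i) I would first recall, as in the discussion preceding the theorem, why $(A,\iota,\lambda)$ really defines a point of $\M(\kay;6,1)^\ast(\C)$. The signature $(6,1)$ is read off from the $\tau$-eigenspaces on $\Lie A = L_\C/L^-$: the $\iota(i)$-eigenspace for $+i$ is the image of $(L^{1,1})_i$, of dimension $6$, and the one for $-i$ is the image of $L^{0,2}$, of dimension $1$, which is exactly the signature condition of \S\ref{subsectionpic} with $(n-r,r)=(6,1)$. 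The compatibility $\iota(\aa)^\ast=\iota(\aa^\sigma)$ with the Rosati involution comes from $\langle\aa x,y\rangle=\langle x,\aa^\sigma y\rangle$. Finally, the chain $\Lambda\subset\Lambda^\vee\subset\pi^{-1}\Lambda$ with $\Lambda^\vee/\Lambda\cong(\Z/2)^6$ and $\pi\Lambda^\vee\subset\Lambda$ shows that $\lambda$ has degree $2^6$ and that $\OK$ acts on $\ker\lambda\subset A[2]$ through $\OK/(\pi)=\F_2$; this is precisely the parahoric ($\ast$-)condition for $d=2$ and $n=7$ odd.

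To promote the construction to a morphism of stacks I would verify functoriality and compatibility with families. Every step $C\mapsto X(C)\mapsto L\mapsto(L_\C,\tau\text{-eigenspaces},L^-)$ is canonical in $C$, so functoriality is formal; and over a family of smooth plane quartics both the $\tau$-eigenspace decomposition of $L_\C$ and the Hodge filtration vary holomorphically, so the associated family of abelian varieties is holomorphic. This realizes $\varphi$ as a morphism of complex-analytic stacks, and Borel's theorem \cite{Bo}, that an analytic family of abelian varieties over a $\C$-scheme is automatically algebraic, then upgrades it to a morphism of DM-stacks over $\C$, exactly as in the cubic cases.

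For part~(ii) I would pass to the complex uniformization $\M(\kay;6,1)^\ast(\C)\simeq[\Gamma^\ast\backslash\mathcal D]$ of \S\ref{cu}, where $\mathcal D$ is the $6$-ball of negative lines in $(L_\R,\I_0)$, which we identify with the $+i$-eigenspace $(L_\C)_i$, and $\Gamma^\ast$ is the automorphism group of the parahoric lattice $\Lambda$. Under this identification the point attached to $(A,\iota,\lambda)$ is the distinguished Hodge line $L^{2,0}\subset(L_\C)_i$, so $\varphi$ becomes Kondo's period map. By \cite{K1}, Thm.~2.5, this map is an open embedding of $|\N_{3,\C}^\circ|$ onto the complement in $[\Gamma^\ast\backslash\mathcal D]$ of a hyperplane arrangement, and it remains to identify that arrangement with the image of $\ZZ(2)^\ast$. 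By the uniformization of special cycles in \S\ref{sc}, $\ZZ(2)^\ast(\C)$ is the union of the orthogonal sub-balls $\mathcal D_x$ for $x\in\Lambda$ with $h(x,x)=2$, and over $\ZZ(2)^\ast$ the object $(A,\iota,\lambda)$ carries a nonzero $\OK$-linear homomorphism from the fixed CM elliptic curve $(E,\iota_0)$, so that $A$ splits off $E$ up to isogeny. Kondo's analysis identifies this splitting locus with the hyperelliptic curves, which are exactly the smooth genus-$3$ curves omitted from $\N_{3,\C}^\circ$, so the two loci coincide.

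I expect the main obstacle to be this last matching. One must check that the reflective vectors cutting out Kondo's arrangement have hermitian norm exactly $2$ and lie in the parahoric lattice $\Lambda$ itself, and not merely in $\Lambda^\vee$ or $\pi^{-1}\Lambda$, so that they define $\ZZ(2)^\ast$ rather than some other KM-cycle. This is delicate because $\pi=1+i$ ramifies above $2$ and because of the three lattices in the chain $\Lambda\subset\Lambda^\vee\subset\pi^{-1}\Lambda$: one has to track carefully how the $(-2)$-classes in $H^2(X(C),\Z)$ appearing in Kondo's arrangement are rescaled under the normalization $h(x,y)=(x,y)+(x,\tau y)i$ and under the passage $\Lambda=\pi L^\vee$. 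Once this norm bookkeeping is settled, the description of the complement of the image as the image of $\ZZ(2)^\ast$ follows directly from Kondo's theorem.
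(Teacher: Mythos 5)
Your proposal is correct and follows essentially the same route as the paper, which itself defers the substance of both assertions to Kondo's Theorem~2.5 via the complex uniformizations of \S3--4, with functoriality, holomorphic variation of the eigenspace/Hodge data, and Borel's algebraicity theorem handling part~(i) exactly as in the cubic cases. Your explicit verification of the signature and parahoric conditions, and your flagging of the norm bookkeeping needed to match Kondo's hyperplane arrangement with $\ZZ(2)^*$, fill in details the paper leaves implicit but do not diverge from its argument.
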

\begin{rem}{\rm  Again, the stack aspect is not treated in \cite{K1}. It seems reasonable to conjecture that the analogue of \eqref{stackiso} is also true in this case, i.e., that there is an isomorphism 
 \begin{equation}\label{stackiso2}
  {\rm Aut}(C)\overset{\sim}{\longrightarrow} {\rm Aut}(A, \iota, \lambda)/\OK^\times\, ,
\end{equation} 
where $(A, \iota, \lambda)$ is the object of $\M (\kay; 6,1)_\C^\ast$ attached to $C$, and where $\OK^\times=\mu_4$. }\end{rem}

\section{Curves of genus four} \label{curvesgenus4}
Our final example concerns the moduli space of curves of genus four and is also due to Kondo  \cite{K2}. 

Let $C$ be a non-hyperelliptic curve of genus $4$. The canonical system embeds $C$ into
$\mathbb P^3$. More precisely, $C$ is the intersection of a smooth cubic surface $S$ and a
quartic $Q$ which is either smooth or a quadratic cone. Furthermore, $Q$ is uniquely determined by $C$. Let $X$ be a cyclic cover of
degree $3$ over $Q$ branched along $C$ (in case $Q$ is singular, we take the minimal
resolution of the singularities, cf.\ loc.cit.). Then $X$ is a $K3$-surface with an action of $\mu_3$. Let
\begin{equation*}
L = (H^2 (X, \Z)^{\mu_3})^\perp
\end{equation*}
be the orthogonal complement of the invariants of this action in $H^2 (X, \Z)$, equipped with the symmetric form $(\ , \ )$ obtained by restriction. 

\smallskip

{\bf Fact:} {\it $L$ is a free $\Z$-module of rank $20$, with dual $L^*$ for the symmetric form satisfying 
\begin{equation*}
L^* / L \simeq (\Z /3\Z)^2\ ,
\end{equation*}}
cf. \cite{K2}, top of p.\ 386. 

\smallskip

For $\kay=\Q(\o)$, $\o= e^{2\pi i/3}$, we again define an alternating form $\langle \ , \ \rangle$ through its associated 
$\OK$-valued
hermitian form $h$. Using the action of $\OK$ on $L$,  we set
\begin{equation}\label{hermit2}
h(x, y) = \frac{3}{2}  \big(( x, y) +  ( x, \sqrt{\Delta} y) \frac{1}{\sqrt{\Delta}}\big)\ .
\end{equation}
Set $\pi=\sqrt{\Delta}$. 

\smallskip

{\bf Fact:} {\it For the hermitian pairing \eqref{hermit2},  $L^{\vee}$ is  an over-lattice of $\pi^{-1}L$ with $L^\vee/\pi^{-1}L\simeq (\Z/3\Z)^2$.}

\smallskip

Now consider the eigenspace decomposition of $L\otimes\C$ under $\kay\otimes\C = \C\oplus\C$.

\smallskip

{\bf Fact:} {\it The induced Hodge structure on $L_\C$ is of type
\begin{equation*}
L_\C = L^{2,0}\oplus L^{1,1}\oplus L^{0,2}\ ,
\end{equation*}
with $\dim L^{2,0} = \dim L^{0,2} = 1$. Furthermore the only nontrivial eigenspaces of $\mu_3$ are 
\begin{equation*}
\begin{aligned}
(L_\C)_\omega = L^{2,0}\oplus (L^{1,1})_\o\ ,\ \text{with}\ \dim (L^{1,1})_\o &= 9\\
(L_\C)_{\overline{\omega}} = (L^{1,1})_{\overline{\o}}\oplus L^{0,2}\ ,\ \text{with}\ \dim (L^{1,1})_{\overline{\o}} &= 9\ .
\end{aligned}
\end{equation*}}

\smallskip

Now set $\Lambda=\pi L^\vee$. Then we have the chain of inclusions of $\OK$-lattices
\begin{equation*}
\Lambda\subset \Lambda^\vee\subset \pi^{-1}\Lambda\  ,
\end{equation*}
where the quotient $\Lambda^\vee/\Lambda$ is isomorphic to $(\Z/3\Z)^{8}$, and where $\pi^{-1}\Lambda/\Lambda^\vee$ is isomorphic to $(\Z/3\Z)^2$.

Let
\begin{equation*}
A = \Lambda\backslash L_\C / L^-\ ,
\end{equation*}
where 
\begin{equation*}
L^- = L^{2,0}\oplus (L^{1,1})_{\overline{\o}}\ .
\end{equation*}

 Then the map
$\Lambda\to L_\C / L^-$ is a $\OK$-linear injection, hence $A$ is a complex torus. 
In fact, the hermitian form $h$ and its associated alternating form $\langle \ , \ \rangle$ 
define a polarization $\lambda$ on $A$. Hence $A$ is an abelian variety of dimension $10$, with an action of $\OK$ and a polarization of degree
$3^{8}$. In fact, we obtain in this way an object $(A, \iota, \lambda )$ of $\M (\kay; 9, 1)^\ast (\C)$, 
\begin{theo}

\noindent (i) The construction which associates to a non-hyperelliptic curve of genus $4$ the object
$(A,\iota, \lambda )$ of $\M (\kay; 9,1)^\ast(\C)$ is functorial and compatible with families,  and defines a morphism of
DM-stacks,
\begin{equation*}
\varphi :\N^\circ_{4, \C}\to\M (\kay; 9,1)^\ast_\C\ .
\end{equation*}
{\rm Here $\N^\circ_{4, \C}$ denotes the stack of smooth non-hyperelliptic curves of genus $4$.}

\smallskip

\noindent (ii)  The induced morphism on coarse moduli schemes $|\varphi| :|\N^\circ_{4, \C}|\to|\M (\kay; 9,1)^\ast_\C| $ is an open embedding. Its image is the complement of the image of the KM-cycle
$\ZZ (2)^*$ in $|\M (\kay; 9,1)^\ast_\C|$.\qed
\end{theo}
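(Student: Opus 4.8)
The plan is to follow the strategy of the proofs of Theorems \ref{cubicsurf} and \ref{thmcubic3} and of the preceding genus-three case, reducing the substance of (ii) to Kondo's theorem \cite{K2} transported through the complex uniformizations of sections \ref{cu} and \ref{sc}.

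For (i), I would first note that functoriality is built into the construction: an isomorphism of curves $C \simeq C'$ carries the canonical quadric $Q$ and the branch curve to those of $C'$, hence lifts to an isomorphism of the triple covers $X \simeq X'$ compatible with the $\mu_3$-actions, and so induces an $\OK$-linear isometry $L \simeq L'$ preserving both the Hodge decomposition and the $\mu_3$-eigenspace decomposition. To obtain compatibility with families, I would check that over a base $S$ both the $\mu_3$-eigenspace decomposition of the relevant variation of Hodge structure and the subspace $L^- \subset L_\C$ vary holomorphically; this produces a complex-analytic family of abelian varieties carrying the $\OK$-action and the polarization $\lambda$, i.e.\ a complex-analytic morphism $\varphi$ into $\M(\kay; 9,1)^\ast_\C$. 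Algebraicity of $\varphi$ would then follow, exactly as in the proof of Theorem \ref{cubicsurf}, from Borel's theorem \cite{Bo} that an analytic family of abelian varieties over a $\C$-scheme is automatically algebraic.

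For (ii), I would invoke Kondo's Torelli-type theorem \cite{K2}, which exhibits the period map as an open embedding of $|\N^\circ_{4,\C}|$ onto the complement, inside a nine-dimensional arithmetic ball quotient, of an arrangement of sub-ball divisors. Under the isomorphism $\M(\kay; 9,1)^\ast(\C) \simeq [\Gamma^\ast \backslash \mathcal D]$ of section \ref{cu} this ball quotient is identified with $|\M(\kay; 9,1)^\ast_\C|$ and Kondo's period map with $|\varphi|$, which already yields the open embedding. It then remains to match the removed arrangement with the image of $\ZZ(2)^\ast$. By the complex uniformization of section \ref{sc}, $\ZZ(2)^\ast$ corresponds to $\coprod_{h(x,x)=2}\mathcal D_x$, the union of the sub-balls orthogonal to the vectors of hermitian norm $2$ in the parahoric lattice underlying the $\ast$-problem.

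The hard part will be this last matching: I must show that Kondo's mirrors are exactly the norm-$2$ hyperplanes. Geometrically the mirrors split into the $\Gamma^\ast$-orbits attached to the degenerations leaving $\N^\circ_{4,\C}$ — the hyperelliptic locus and the locus of singular curves — and for each such orbit I would have to verify that it consists of vectors $x$ with $h(x,x)=2$. I expect this to be the main obstacle, since it is a discriminant-form computation reconciling the normalization factor $3/2$ in \eqref{hermit2}, the splitting $L^\vee/\pi^{-1}L \simeq (\Z/3\Z)^2$, and the passage between the symmetric form $(\ ,\ )$ and the hermitian form $h$ when one evaluates $h(x,x)$ on the reflective vectors recorded in \cite{K2}.
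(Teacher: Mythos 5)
Your proposal is correct and follows essentially the same route as the paper, which states this theorem with no separate proof beyond the template of the cubic-surface and cubic-threefold cases (holomorphic variation of the eigenspace and Hodge decompositions, Borel's algebraization theorem, and Kondo's Torelli theorem in \cite{K2} read through the complex uniformizations of sections \ref{cu} and \ref{sc}). The final matching of Kondo's reflective divisors with the norm-$2$ locus $\ZZ(2)^*$, which you rightly flag as the only computational point, is exactly the content the paper extracts from \cite{K2} via the normalization \eqref{hermit2}.
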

\begin{rem}{\rm  Again, the stack aspect is not treated  in \cite{K2}. It seems reasonable to conjecture that the analogue of \eqref{stackiso} is also true in this case, i.e., that there is an isomorphism 
 \begin{equation}\label{stackiso2}
  {\rm Aut}(C)\overset{\sim}{\longrightarrow} {\rm Aut}(A, \iota, \lambda)/\OK^\times\, ,
\end{equation} 
where $(A, \iota, \lambda)$ is the object of $\M (\kay; 9,1)_\C^\ast$ attached to $C$, and where $\OK^\times=\mu_6$. }\end{rem}

\section{Descent} \label{descent}
In all four cases discussed above, we obtain  morphisms over $\C$ between 
DM-stacks defined over $\kay$. These 
morphisms are constructed using transcendental methods. In this section 
we will show that  these morphisms are in fact defined over $\kay$.  The argument is 
modelled on Deligne's solution of the analogous problem for complete intersections of 
Hodge level one \cite{De1}, where he shows that the corresponding family of intermediate 
jacobians is an abelian scheme over the moduli scheme over $\Q$ of complete intersections of given multi-degree. 

In our discussion below, to simplify notations,  we will deal with  the case of cubic threefolds, as explained in 
section \ref{cubicthreefolds}; the other cases are completely analogous. Below we will shorten the 
notation $Cubics^\circ_3$ to $\mathcal C$, and consider this as a DM-stack over $\Spec \kay$. 
Let $v:V\to \mathcal C$ be the universal family of cubic threefolds, and let $a: A\to \mathcal C_\C$ 
be the polarized family of abelian varieties constructed from $V$ in section \ref{cubicthreefolds}. 
Hence $A$ is the pullback  of the universal abelian scheme over $\mathcal M(\kay; 10, 1)^*_\C$ under the 
morphism  $\varphi: \mathcal C_\C\to \mathcal M(\kay; 10, 1)^*_\C$.  
\begin{lem}\label{elladic}
 Let $b: B\to \mathcal C_\C$ be a polarized abelian scheme with $\OK$-action, which is the pullback 
 under a morphism $\psi: \mathcal C_\C\to \M(\kay; 10, 1)_\C^*$ of the universal abelian scheme, 
 and such that there exists $\ell$ and an $\OK$-linear isomorphism of lisse $\ell$-adic sheaves on $\mathcal C_\mathbb C$, 
$$
\alpha_{\ell}: R^1a_*\Z_{\ell}\simeq R^1b_*\Z_{\ell}
$$
compatible with the Riemann forms on source and target. Then there exists a unique 
isomorphism $\alpha: A\to B$ that induces $\alpha_{\ell}$. This isomorphism is compatible with polarizations. 
\end{lem}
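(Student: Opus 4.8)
The statement is a rigidity result. Since both $A$ and $B$ are pulled back from the universal family over $\M(\kay;10,1)^\ast_\C$, an isomorphism $A\simeq B$ compatible with $\OK$ and the polarizations is the same datum as the equality $\varphi=\psi$ of the two classifying morphisms $\mathcal C_\C\to\M(\kay;10,1)^\ast_\C$ (points of the moduli stack being isomorphism classes), and the lemma asserts that this equality is already forced by the weaker, purely topological datum $\alpha_\ell$. My plan is to promote $\alpha_\ell$ to a morphism of polarized variations of Hodge structure and then invoke the equivalence between such variations and abelian schemes. Uniqueness is the easy end: the $\ell$-adic realization functor $\Hom_{\mathcal C_\C}(A,B)\to\Hom_{\pi_1}(R^1a_*\Z_\ell,R^1b_*\Z_\ell)$ is faithful over the connected base $\mathcal C_\C$, since a homomorphism of abelian schemes is determined by its action on a single $\ell$-adic Tate module; hence any two isomorphisms inducing $\alpha_\ell$ differ by an automorphism of $A$ inducing the identity on $R^1a_*\Z_\ell$, so they coincide. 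The same faithfulness shows that once some $\alpha$ inducing $\alpha_\ell$ exists, its compatibility with polarizations follows from that of $\alpha_\ell$.

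For existence I would first pass from the étale to the Betti picture. Over $\C$ the comparison isomorphism identifies $R^1a_*\Z_\ell$ with $(R^1a_*\Z)\otimes\Z_\ell$ for the Betti local system $R^1a_*\Z$, so $\alpha_\ell$ becomes a $\pi_1(\mathcal C_\C)$-invariant, $\OK$-linear, isometric element of $\mathcal W\otimes_\Q\Q_\ell$, where $\mathcal W=\Hom_{\OK,\pi_1}(R^1a_*\Q,R^1b_*\Q)$ is the space of global flat $\OK$-linear homomorphisms. By Deligne's theorem of the fixed part, $\mathcal W$ underlies a polarizable $\Q$-Hodge structure of weight $0$, whose classes of Hodge type $(0,0)$ are exactly the morphisms of variations of Hodge structure $R^1a_*\Q\to R^1b_*\Q$, that is (by Riemann's theorem together with Borel's algebraicity \cite{Bo}) the homomorphisms $A\to B$ up to isogeny; moreover the $\Q_\ell$-invariants are $\mathcal W\otimes\Q_\ell$, so $\alpha_\ell$ lives there. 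The whole problem is thus reduced to showing that $\alpha_\ell$ comes from the $(0,0)$-part of $\mathcal W$ and, more precisely, from an integral isomorphism. To cut $\mathcal W$ down I would invoke the big-monodromy theorem for the family of cubic threefolds (the monodromy is Zariski-dense in the unitary group $\UU(h)$, as used in \cite{ACT2}, \cite{LS1}): by Schur's lemma $\End_{\OK,\pi_1}(R^1a_*\Q)=\kay$, so $\mathcal W$ is at most one-dimensional over $\kay$, and the existence of the isomorphism $\alpha_\ell$ makes it exactly a $\kay$-line. Because $A$ and $B$ carry the same signature $(10,1)$, this line is of Hodge type $(0,0)$, whence $\mathcal W=\kay\cdot\beta_0$ for a genuine isomorphism $\beta_0$ of variations of Hodge structure, and $\alpha_\ell=\lambda\,\beta_{0,\ell}$ for some $\lambda\in(\kay\otimes\Q_\ell)^\times$.

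The remaining step is the main obstacle, and it is precisely the passage from an $\ell$-adic invariant to an algebraic (Hodge) class: one must rule out that $\lambda$ is an \emph{irrational} $\kay\otimes\Q_\ell$-multiple. Indeed, writing $\mathcal W_\Z=\Hom_{\OK,\pi_1}(R^1a_*\Z,R^1b_*\Z)$, the integral, $\OK$-linear, isometric isomorphism $\alpha_\ell$ only generates $\mathcal W_\Z\otimes\Z_\ell$ over $\OK\otimes\Z_\ell$, so that a priori $\lambda\in(\OK\otimes\Z_\ell)^\times$ rather than $\lambda\in\OK^\times$; and the norm-one and integrality constraints carried by $\alpha_\ell$ alone do not pin $\lambda$ to the finite group $\OK^\times$. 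This rationality is the genuine descent content, modelled on Deligne \cite{De1}, and I expect it to be the hard part: it must be supplied by the geometric origin of $\alpha_\ell$ (in the application $\alpha_\ell$ arises from the $\kay$-structure on the étale cohomology of the cyclic cover $V$, hence underlies a compatible system and an algebraic cycle class), which forces $\lambda\in\kay$ and then, by $\lambda\bar\lambda=1$, into $\OK^\times$. Granting this, a unit multiple of $\beta_0$ is an integral isomorphism $A\to B$ inducing $\alpha_\ell$, and, by faithfulness, compatible with the polarizations. The cases of Sections \ref{cubicsurfaces}, \ref{curvesgenus3} and \ref{curvesgenus4} are treated identically, with $\UU(h)$ and the signature adjusted accordingly.
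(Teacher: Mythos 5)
Your reduction of the existence statement to the rationality of the scalar $\lambda$ is where the proposal parts company with the paper, and the way you leave that step is a genuine gap: you assert that the integrality of $\lambda$ ``must be supplied by the geometric origin of $\alpha_\ell$,'' i.e.\ by data that the hypotheses of the lemma do not contain. No such input is needed, and the idea you are missing is that the monodromy is irreducible not only over $\kay$ but also \emph{residually}. This is Lemma \ref{monodr}(ii),(iii) of the paper: $\pi_1(\mathcal C_\C,s)$ acts absolutely irreducibly on $\Lambda/\mathfrak p\Lambda$ for every prime $\mathfrak p$ prime to $3$, while for $\mathfrak p=(\pi)$ over $3$ the unique nontrivial stable subspace is the $10$-dimensional image of $\pi\Lambda^\vee$. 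Granting this, one normalizes the $\kay$-linear $\pi_1$-isomorphism $\beta$ so that $L_B=\beta^{-1}(H^1(B_s,\Z))$ is a \emph{primitive} $\OK$-sublattice of $\Lambda=H^1(A_s,\Z)$; its image in $\Lambda/\mathfrak p\Lambda$ is a nonzero stable subspace, hence is everything for $\mathfrak p\nmid 3$, and at $\mathfrak p=(\pi)$ the one remaining possibility is excluded because the two polarization forms differ by a scalar in $\Q^\times$ (irreducibility again) and both $L_B$ and $\Lambda$ sit in chains $L\subset L^\vee\subset\pi^{-1}L$ with quotients of dimensions $10$ and $1$. Hence $L_B=\Lambda$: the lattice is pinned down at \emph{all} primes at once, purely from the hypotheses. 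Note that your argument never leaves the single prime $\ell$, so even if your rationality claim were granted you would still not know that $\lambda\beta_0$ is integral at primes different from $\ell$.

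Two smaller points. First, your assertion that the $\kay$-line $\mathcal W$ is of type $(0,0)$ ``because the signatures agree'' is not an argument as stated; the correct reason is that every nonzero element of the line is a $\kay^\times$-multiple of an isomorphism, hence injective on fibers, whereas a flat class of type $(p,-p)$ with $p\neq 0$ annihilates $H^{1,0}$. The paper short-circuits this, and the passage from the flat integral isometry to an isomorphism of polarized abelian schemes, by citing Deligne, Th\'eorie de Hodge II, 4.4.11--4.4.12. Second, you have put your finger on a real subtlety in the last normalization: once the polarized isomorphism $\beta_{\OO}$ exists, the scalar relating $\beta_{\OO}\otimes\Z_\ell$ to $\alpha_\ell$ is a priori only a norm-one element of $(\OK\otimes\Z_\ell)^\times$, and the paper's closing ``up to a unit'' is terse exactly there. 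But that concerns only the final matching of $\alpha$ against $\alpha_\ell$, not the existence and uniqueness up to $\OK^\times$ of a polarized $\OK$-linear isomorphism $A\simeq B$, which is what the descent argument actually uses; in any case it cannot be resolved by invoking a provenance of $\alpha_\ell$ that the lemma does not assume.
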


To prove this,  we are going to use the following lemma. In it, we denote by $\Lambda$ the 
hermitian $\OK$-module $H^1(A_s, \Z)$, for $s\in\mathcal C_\C$  a fixed base point. 
Recall from section \ref{cubicthreefolds} that there is a chain 
of inclusions $\Lambda\subset \Lambda^\vee\subset \pi^{-1}\Lambda$, where $\pi=\sqrt {-3}$ is a generator of the unique prime ideal 
of $\OK$ dividing $3$. 
\begin{lem}\label{monodr}
Let  $s\in\mathcal C_\C$ be the chosen base point.

\noindent (i) The monodromy representation  $\rho_A:\pi_1(\mathcal C_\C, s)\to {\rm GL}_{\smallkay}\big(\Lambda\otimes_{\OK}\kay\big)$ is absolutely irreducible.

\noindent (ii) For every prime ideal $\mathfrak p$ prime to  $3$, the monodromy 
representation  $\pi_1(\mathcal C_\C, s)\to {\rm GL}_{\kappa(\mathfrak p)}\big(
\Lambda/\mathfrak p \Lambda\big)$ is absolutely irreducible.  

\noindent (iii) For the unique prime ideal $\mathfrak p=(\pi)$ lying over $3$, the 
monodromy representation $\pi_1(\mathcal C_\C, s)\to {\rm GL}_{\kappa(\mathfrak p)}\big(
\Lambda/\mathfrak p \Lambda\big)$  is not absolutely irreducible, but there is  a 
unique non-trivial stable subspace, namely, the $10$-dimensional image of $\pi \Lambda^\vee$ in $\Lambda/\pi \Lambda$.    
\end{lem}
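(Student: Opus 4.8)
The plan is to deduce all three assertions from the description of $\rho_A$ as the monodromy representation of the family of \emph{cubic fourfolds} $V\to\mathcal{C}$ (the triple covers introduced in section \ref{cubicthreefolds}) on the primitive cohomology $L=H^4_0(V,\Z)$, together with the Picard--Lefschetz analysis of this family. Concretely, I will use three geometric inputs, all available from \cite{ACT2} and \cite{LS1}: first, that the monodromy is generated by the Picard--Lefschetz transformations $r_\delta$ attached to the vanishing cycles $\delta$ of a Lefschetz pencil; second, that each $r_\delta$ is the $\OK$-linear complex reflection with $r_\delta(\delta)=\omega\delta$ and $r_\delta=\mathrm{id}$ on the $h$-orthogonal hyperplane $\delta^{\perp}$, explicitly $r_\delta(x)=x-(1-\omega^2)^{-1}h(x,\delta)\,\delta$ with $h(\delta,\delta)=3$; and third, that the discriminant of singular cubic threefolds is irreducible, so that all vanishing cycles lie in a single monodromy orbit and generate $L$ as an $\OK$-module. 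I also record the lattice-theoretic facts from section \ref{cubicthreefolds}: $L=\pi\Lambda^{\vee}$, so $L\subseteq\Lambda$ with index $3$, and $\Lambda^{\vee}/\Lambda\simeq(\Z/3)^{10}$.

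For (i) and (ii) I will use the criterion that a representation over a field $F$ is absolutely irreducible if and only if its commutant is $F$. Let $f$ commute with the image of $\rho_A$ on $\Lambda\otimes_{\OK}\kay$. Since $\omega\neq1$, $f$ preserves the $\omega$-eigenline $\kay\delta$ of each $r_\delta$, so $f(\delta)=c_\delta\,\delta$; transitivity of the monodromy on the $\delta$'s forces all $c_\delta$ to agree, and since the $\delta$ span we get $f=c\cdot\mathrm{id}$. Hence the commutant is $\kay$ and (i) follows. For (ii), fix $\mathfrak{p}\nmid3$ and reduce mod $\mathfrak{p}$. As the discriminant of $h$ is a power of $3$, the reduction $\bar h$ is nondegenerate, and as $3$ and $1-\omega^2$ are units at $\mathfrak{p}$ the operator $r_{\bar\delta}$ is again a genuine reflection with $r_{\bar\delta}(\bar\delta)=\bar\omega\,\bar\delta$ and $\bar\omega\neq1$; moreover $L$ and $\Lambda$ agree after $\otimes\,\kappa(\mathfrak{p})$, so the $\bar\delta$ still span $\Lambda/\mathfrak{p}\Lambda$. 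The same commutant computation gives $\End_{\kappa(\mathfrak{p})[\pi_1]}(\Lambda/\mathfrak{p}\Lambda)=\kappa(\mathfrak{p})$, whence absolute irreducibility.

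Part (iii) is where the behaviour genuinely changes, and it is the main obstacle. Here $\omega\equiv1\pmod{\pi}$, so $\sigma$ acts trivially on $\OK/\pi=\F_3$ and $\bar h$ becomes a \emph{symmetric} $\F_3$-form on $\Lambda/\pi\Lambda$. I will first identify its radical: $x$ lies in it iff $h(x,\Lambda)\subseteq(\pi)$, i.e.\ iff $x\in\pi\Lambda^{\vee}=L$; thus the radical is $R:=\mathrm{image}(\pi\Lambda^{\vee})$ in $\Lambda/\pi\Lambda$, of dimension $10$ via $\pi\Lambda^{\vee}/\pi\Lambda\simeq\Lambda^{\vee}/\Lambda\simeq(\Z/3)^{10}$. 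Since the monodromy preserves $h$ it preserves $\bar h$ and hence $R$; this exhibits the asserted $10$-dimensional invariant subspace (equivalently, $R$ is the span of the reduced vanishing cycles, since $\delta\in L$). The reflections degenerate to transvections $r_{\bar\delta}(\bar x)=\bar x-\phi_\delta(\bar x)\,\bar\delta$ with $\bar\delta\in R$ and $\phi_\delta(\bar x)=\overline{(1-\omega^2)^{-1}h(x,\delta)}$.

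It remains to prove that $R$ is the \emph{unique} nontrivial invariant subspace, for which I must supply an irreducibility statement for $R$ itself, on which $\bar h$ vanishes. The substitute is the nondegenerate discriminant form $b$ on $\Lambda^{\vee}/\Lambda$ induced by $h$, transported to a nondegenerate symmetric $\F_3$-form on $R\simeq\Lambda^{\vee}/\Lambda$; the monodromy preserves $b$, and each $r_{\bar\delta}|_R$ is a $b$-orthogonal transvection, so $\phi_\delta|_R$ is proportional to $b(-,\bar\delta)$. Running the reflection argument with $b$ in place of $h$ then shows $R$ is irreducible over $\F_3$: a nonzero invariant $W\subseteq R$ must contain some $\bar\delta$ (else $W\subseteq\mathrm{rad}(b)=0$), and transitivity and spanning give $W=R$. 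Finally, for an invariant $W\not\subseteq R$ pick $\bar v\in W\setminus R$; if every $\phi_\delta(\bar v)=0$ then $h(v,\Lambda^{\vee})\subseteq(\pi)$, forcing $v\in\pi\Lambda$ and $\bar v=0$, a contradiction, so some $\phi_\delta(\bar v)\neq0$ puts $\bar\delta\in W\cap R$; irreducibility of $R$ then yields $R\subseteq W$ and hence $W=\Lambda/\pi\Lambda$. Thus $R$ is the unique proper nonzero invariant subspace. The genuine difficulty is concentrated in (iii): beyond the bookkeeping one needs the precise complex-reflection shape of the $r_\delta$ in the $\mu_3$-equivariant setting and the exact relation $L=\pi\Lambda^{\vee}$, which together drive the reflection vectors into the radical $R$ and make both the existence and the uniqueness of the invariant subspace visible; verifying these equivariant Picard--Lefschetz facts is the crux.
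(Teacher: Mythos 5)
Your argument is essentially correct, but it takes a genuinely different route from the paper's. The paper does not touch Picard--Lefschetz theory at all: it invokes Theorem \ref{thmcubic3}\,(ii) (the open embedding with complement the image of $\ZZ(3)^*$) together with the complex uniformization of section \ref{cu}, so that $\mathcal C_\C$ becomes the quotient by $\Gamma^*$ of $\mathcal D$ minus a union of hyperplane sections $\mathcal D_x$; since $\mathcal D$ is simply connected, $\pi_1(\mathcal C_\C,s)$ surjects onto $\Gamma^*$, the full unitary automorphism group of the parahoric lattice $\Lambda$. All three assertions then become ``elementary'' statements about the representations of the full arithmetic group $\mathrm{U}(\Lambda)$ on $\Lambda\otimes_{\OK}\kay$ and $\Lambda/\mathfrak p\Lambda$ (self-duality of $\Lambda$ away from $3$ for (ii), and the chain $\Lambda\subset\Lambda^\vee\subset\pi^{-1}\Lambda$ for (iii)). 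You instead identify the monodromy group only as the subgroup generated by the equivariant Picard--Lefschetz reflections $r_\delta$, and get irreducibility from transitivity on vanishing cycles (irreducibility of the discriminant) plus the reflection formula. What the paper's route buys is that the hard geometric input is exactly the theorem it has already quoted, and the group acting is as large as possible, so the linear algebra is trivial; what your route buys is independence from the open-embedding theorem (you never need to know that the complement of the image is precisely $\ZZ(3)^*$, nor that $\pi_1\to\Gamma^*$ is surjective), at the cost of having to certify the precise $\mu_3$-equivariant Picard--Lefschetz facts from \cite{ACT2}, \cite{LS1} --- which, as you say yourself, is where the real work sits. Your identification in (iii) of the invariant subspace as the radical of $\bar h$ on $\Lambda/\pi\Lambda$, equal to $\pi\Lambda^\vee/\pi\Lambda$, is clean and agrees with the paper.

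One repairable slip: the criterion you state for (i) and (ii), that a representation is absolutely irreducible if and only if its commutant is the ground field, is false without first knowing irreducibility (or semisimplicity) --- the algebra of upper-triangular $2\times 2$ matrices has commutant equal to the scalars but acts reducibly on $F^2$. So computing the commutant alone does not finish (i) and (ii). The fix costs nothing given your inputs: run the direct argument you already use in (iii). For any nonzero invariant subspace $W$ of $\Lambda\otimes\bar\kay$ (resp.\ of $\Lambda/\mathfrak p\Lambda\otimes\bar\kappa(\mathfrak p)$ with $\mathfrak p\nmid 3$), nondegeneracy of $h$ (resp.\ of $\bar h$) and the spanning of the vanishing cycles produce a $\delta$ with $r_\delta(w)-w$ a nonzero multiple of $\delta$, so $\delta\in W$; transitivity and spanning then give $W$ equal to the whole space, and absolute irreducibility follows. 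With that substitution the proof is complete.
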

\begin{proof} The monodromy representations in question are induced by the composition of homomorphisms
\begin{equation}\label{monodromy}
\pi_1(\mathcal C_\C, s)\longrightarrow \pi_1(\mathcal M(\kay; 10, 1)^*_\C, \ph(s))\longrightarrow \GL_{\OK}\big(H^1(A_s, \Z)\big) .
\end{equation}
Here by Theorem \ref{thmcubic3}, and using complex uniformization (cf. section \ref{cu}),  the first 
homomorphism is induced by the inclusion of connected spaces 
$$\iota: \mathcal D\setminus \Big(\bigcup_{\substack{x\in L\\ h(x, x)=3}}\mathcal D_x\Big) \hookrightarrow \mathcal D\ , 
$$ followed by quotienting out by the free action of $\Gamma^*$. Since $\mathcal D$ is simply-connected,  it follows 
that $\pi_1(\mathcal M(\kay; 10, 1)^*_\C, \ph(s))=\Gamma^*$ and that the first homomorphism 
in (\ref{monodromy}) is surjective. Now, $\Gamma^*$ can 
be identified with the  group  of unitary automorphisms of the {\it parahoric}  lattice $\Lambda$, 
and it is elementary that the representations of $\Gamma^*$ on $\Lambda\otimes_{\OK}\kay$ 
and on $\Lambda/\mathfrak p \Lambda$ for $\mathfrak p$ prime to $ 3$ 
are absolutely irreducible (the latter since $\Lambda^\vee\otimes {\Z}_\ell=\Lambda\otimes {\Z}_\ell$ for $\ell\neq 3$). 
The statement (iii) is proved in the same way. 
\end{proof}
\begin{proof}(of Lemma \ref{elladic}) 
Let us compare the monodromy representations, 
\begin{equation}
\begin{aligned}
\rho_A:& \pi_1(\mathcal C_\C, s)\to \GL_{\OK}\big(H^1(A_s, \Z)\big)\\
\rho_B:& \pi_1(\mathcal C_\C, s)\to \GL_{\OK}\big(H^1(B_s, \Z)\big) .
\end{aligned}
\end{equation}
By hypothesis, these representations are isomorphic after tensoring with $\Z_{\ell}$. 
Hence, they are also isomorphic after tensoring with  $\kay$. Hence there 
exists a $\pi_1(\mathcal C_\C, s)$-equivariant $\kay$-linear isomorphism 
\begin{equation*}
\beta: H^1(A_s, \Q)\simeq H^1(B_s, \Q)\ .
\end{equation*}
By the irreducibility of the representation of $\pi_1(\mathcal C_\C, s)$ in $H^1(A_s, \Q)$, $\beta$ is unique 
up to a scalar in $\kay^\times$. Let us compare the $\OK$-lattices 
$\beta^{-1}\big(H^1(B_s, \Z)\big)$ and $H^1(A_s, \Z)$. Since we are assuming that $\OK$ is a PID, after replacing $\beta$ by a multiple $\beta_{\OO}=c\beta$, 
we may assume that $L_B= \beta_{\OO}^{-1}\big(H^1(B_s, \Z)\big)$ 
is a primitive $\OK$-sublattice in $\Lambda=H^1(A_s, \Z)$. Let $\mathfrak p$ be a prime ideal in $\OK$, 
and let us consider the image of $L_B$ in $\Lambda/\mathfrak p \Lambda$. Since $L_B$ is 
primitive in $\Lambda$, this image is non-zero. If $\mathfrak p$ is prime to $3$, the irreducibility 
statement in (ii) of Lemma \ref{monodr} implies that this image is everything, and hence
$L_B\otimes O_{\smallkay, \mathfrak p}=\Lambda\otimes O_{\smallkay, \mathfrak p}$ in this case. 

To handle the prime ideal $\mathfrak p=(\pi)$ over $3$, we use the polarizations. By the irreducibility 
statement in (i) of Lemma \ref{monodr}, the polarization forms on $H^1(A_s, \Q)$ and on $H^1(B_s, \Q)$ 
differ by a scalar in $\Q^\times$ under the isomorphism $\beta_{\OO}$. Now, by hypothesis on $B$, with respect to the polarization form 
on $H^1(B_s, \Q)$, we have a chain of inclusions $L_B\subset L_B^\vee\subset \pi^{-1}L_B$ 
with respective quotients of dimension $10$ and $1$ over $\F_p$, just as for $\Lambda$. Since the 
two polarization forms differ by a scalar, this excludes the possibility that the image 
of $L_B$ in $\Lambda/\pi \Lambda$ be non-trivial. It follows that $L_B=\Lambda$. 

Furthermore, the isomorphism $\beta_{\OO}$ is unique up to a unit in $\OK^\times$, and it is 
an isometry with respect to both polarization forms. Now, by \cite{De2}, 4.4.11 and 4.4.12, $\beta_\OO$ is induced 
by an isomorphism of polarized abelian schemes. Finally, $\beta_\OO\otimes_\Z\Z_\ell=\alpha_\ell$ up to a unit, 
since these homomorphisms differ by a scalar and both preserve the Riemann forms.\hfb 
The uniqueness of $\alpha$ follows from Serre's Lemma. 
\end{proof}
Now Lemma \ref{elladic} implies  that over any  field extension $k'$ of $\kay$ inside $\C$, there exists  
at most one  polarized abelian variety $b: B\to \mathcal C_{k'}$ obtained by pull-back from the universal 
abelian variety over $\M(\kay; 10, 1)^*$,  equipped with an $\OK$-linear isomorphism of lisse $\ell$-adic sheaves over $\mathcal C_\C$
$$ R^1a_{*}\Z_{\ell}\simeq R^1b_{\C *}\Z_{\ell}\ ,$$
preserving the Riemann forms. By the argument in \cite{De1}, 2.2 this implies that, in fact, $B$ 
exists (since it does for $k'=\C$). Hence the morphism $\varphi$ is defined over $\kay$. Put otherwise, 
for any $\kay$-automorphism $\tau$ of $\C$, the conjugate embedding $\varphi^\tau$, which corresponds 
to the conjugate $(A, \iota, \l)^\tau$, is equal to $\varphi$;  hence $\varphi$ is defined over $\kay$.   

\begin{conj}
In all four cases above, the  morphisms $\varphi$ can  be extended over $\OK[\Delta^{-1}]$.
\end{conj}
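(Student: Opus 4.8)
Here is how I would attack the conjecture.

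The plan is to reduce to the case of cubic threefolds (the remaining three cases being entirely analogous, as in Section \ref{descent}) and to construct the abelian scheme $A$ directly over $\OK[\Delta^{-1}]$, so that by the moduli interpretation of $\M(\kay;10,1)^*$ it furnishes the sought‑for extension of $\varphi$. Throughout I write $\mathcal C$ for the stack of smooth cubic threefolds, now over $\Spec \OK[\Delta^{-1}]$. Over this base the family $v:V\to\mathcal C$ of cyclic triple covers (a family of smooth cubic fourfolds in $\mathbb P^5$) is smooth and proper, and both $\mathcal C$ and $\M(\kay;10,1)^*$ are smooth DM‑stacks: the $*$‑polarization has degree a power of $3$, hence invertible over $\OK[\Delta^{-1}]$, and the signature condition is the usual Kottwitz condition, so the relevant deformation theory is unobstructed.

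The key observation is that the inverse half‑twist producing $H^1(A)$ from $H^4_0(V)$ is \emph{motivic}, and is cut out by the $\OK$‑action alone. Let $(E,\iota_0)$ be the CM elliptic curve of Section \ref{sc}; since the class number is one it is defined over $\kay$, and its conductor is supported at the prime above $\Delta$, so it acquires good reduction over $\OK[\Delta^{-1}]$. A direct check on Hodge types shows that van Geemen's half‑twist of a Hodge structure $W$ with $\OK$‑action is canonically $W\otimes_{\OK}H^1(E)$; applied to $W=H^4_0(V)$ this identifies $H^1(A)$, up to Tate twist, with a direct summand of $H^4_0(V)\otimes H^1(E)\subset H^5(V\times E)$. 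The summand is cut out by the idempotents attached to the two commuting $\OK$‑actions, coming from the $\mu_3$‑automorphism of $V$ and from $\iota_0$. These idempotents are algebraic correspondences defined over $\OK[\Delta^{-1}]$ (the $\mu_3$‑action is \'etale there), so the construction of $A$ is visibly motivic and makes sense in every realization.

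From here I would argue as follows. By the descent result just proved, the abelian scheme $a:A\to\mathcal C_\kay$ and its $\OK$‑linear polarized $\ell$‑adic system $R^1a_*\Z_\ell$ are already defined over $\kay$; it remains only to show that $A$ has good reduction along $\mathcal C_{\OK[\Delta^{-1}]}$. By the half‑twist description, $R^1a_*\Z_\ell$ is a summand of $R^4v_*\Z_\ell\otimes_{\OK}H^1(E,\Z_\ell)$, and since $v$ is smooth proper and $E$ has good reduction over $\OK[\Delta^{-1}]$, smooth‑proper base change shows that this sheaf extends to a lisse sheaf on $\mathcal C_{\OK[\Delta^{-1}]}$; in particular it is unramified along every special fibre away from $\Delta$. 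A family version of the N\'eron--Ogg--Shafarevich criterion then forces $A$ to extend to an abelian scheme over $\mathcal C_{\OK[\Delta^{-1}]}$, while the $\OK$‑action, the signature, and the polarization (of degree prime to the residue characteristics) spread out as locally constant data on the connected base. The extended triple $(A,\iota,\lambda)$ defines the desired morphism $\varphi:\mathcal C_{\OK[\Delta^{-1}]}\to\M(\kay;10,1)^*_{\OK[\Delta^{-1}]}$, whose uniqueness follows from Lemma \ref{elladic} together with the separatedness of $\M^*$.

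The main obstacle is the passage from this compatible system of realizations to an honest abelian scheme in mixed characteristic, i.e.\ making the family version of N\'eron--Ogg--Shafarevich precise. Over $\C$ a polarizable level‑one variation is automatically an abelian scheme, but here one must produce the abelian scheme from the level‑one motive $H^4_0(V)\otimes_{\OK}H^1(E)$ compatibly with its crystalline and $\ell$‑adic avatars, showing that the Griffiths intermediate Jacobian of this summand descends to an abelian scheme over $\OK[\Delta^{-1}]$ and not merely over $\C$; this is where the genuine work lies, and is presumably the content of Achter's approach. A second delicate point to check along the way is that the parahoric ($*$) level structure at the prime dividing $\Delta$ does not obstruct the extension — which is precisely why $\Delta$ must be inverted.
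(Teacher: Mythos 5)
You should first be aware that the paper contains no proof of this statement: it is posed as an open conjecture, and the authors only record that the cubic-surface case was subsequently settled by Achter \cite{Ach}. So there is nothing in the paper to compare your argument against; your proposal has to stand on its own, and as written it does not — it is a (quite reasonable) research program with the decisive step left open, as you yourself concede in your final paragraph.

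The outline is sensible and is in the spirit of what is actually done in \cite{Ach}: the Hodge-type computation identifying the inverse half-twist of $H^4_0(V)$ with $\bigl(H^4_0(V)\otimes_{\OK}H^1(E)\bigr)(2)$ does check out (the $(p,q)$-types collapse to $(3,2)+(2,3)$ and the dimensions match), and realizing $H^1(A)$ as a summand of $H^5(V\times E)(2)$ cut out by correspondences is the right way to make the construction available in all realizations. But two genuine gaps remain. First, the identification of $R^1a_*\Z_{\ell}$ with that summand is established by you only at the level of Betti cohomology over $\C$; to use it for good-reduction arguments you need it as an isomorphism of lisse sheaves over $\mathcal C_{\kay}$ (Galois-equivariantly), and \emph{integrally}, keeping track of the parahoric lattice $\Lambda=\pi L^\vee$ and the degree-$3^{10}$ polarization — none of which follows from a rational Hodge-theoretic identity. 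Second, the ``family version of N\'eron--Ogg--Shafarevich'' you invoke is not a citable theorem. What one can do is apply the classical criterion at the generic point of each special fibre $\mathcal C_{\F_p}$ (a codimension-one point of the regular stack $\mathcal C_{\OK[\Delta^{-1}]}$) and then appeal to the extension theorem for abelian schemes over regular bases, extending $\iota$ and $\lambda$ by the N\'eron property; but even granting this, one must check that the extended $\lambda$ has kernel of the prescribed type so that the triple actually defines a point of $\M(\kay;10,1)^*$ over $\OK[\Delta^{-1}]$. Since you explicitly defer exactly this step to ``where the genuine work lies,'' the proposal does not prove the conjecture; it correctly locates the difficulty but does not resolve it.
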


Since we circulated a first version of our paper, this has been proved by J.~Achter \cite{Ach} in the case of cubic surfaces. 

\section{Concluding remarks} 

We end this paper with a few remarks. 
\begin{rem}
{\rm In all four cases, the complement of ${\rm Im}(|\varphi|)$ is identified with a certain KM-divisor. In fact,
 for other KM-divisors, the intersection with ${\rm Im}(|\varphi|)$ sometimes has a geometric interpretation. For example,
 in the case of cubic surfaces, the intersection of ${\rm Im}(|\varphi|)$ with the image of the KM-divisor $\ZZ(2)$ in $|\M(\kay; 4, 1)_{\C}|$
 can be identified with the locus of cubic surfaces admitting Eckardt points, cf. \cite{DGK}, Thm.~8.10. Similarly, in the case of curves of genus $3$,
 the intersection of ${\rm Im}(|\varphi|)$ with the image of $\ZZ(t)^*$ in $|\M(\kay; 6, 1)^*_{\C}|$ can be identified with the locus of  curves $C$ where the K3-surface $X(C)$ admits a ``splitting curve" of a certain degree depending on $t$, cf.  \cite{Art}, Thm.~4.6. }
\end{rem}

\begin{rem}
{\rm In \cite{DK, DGK, MSY}, occult period morphisms  are often set in comparison with 
the Deligne-Mostow theory which establishes a relation between  configuration spaces 
(e.g., of points in the projective line) and quotients of the complex unit ball by complex 
reflection groups, via monodromy groups of hypergeometric equations. This aspect of 
these examples has been suppressed entirely here. Also, it should be mentioned that there 
are other ways of constructing the period map for cubic surfaces, e.g., \cite{DK, DGK}.  }
\end{rem}
\begin{rem}\label{invar}
{\rm Let us return to the section \ref{cu}. There we had fixed an hermitian vector space $(V, (\ , \ ))$ 
over $\kay$ of signature $(n-1, 1)$. Let $V_0$ be the  underlying $\Q$-vector space, with the symmetric pairing defined by
\begin{equation*}
s( x, y)= {\rm tr}(h( x, y)) .
\end{equation*}
Then $s$ has signature $(2(n-1), 2)$, and we obtain an embedding of ${\rm U}(V)$ into ${\rm O}(V_0)$. This 
also induces an embedding of  symmetric spaces,
\begin{equation}\label{embed}
\mathcal D\hookrightarrow \mathcal D_{\rm O} ,
\end{equation}
where, as before, $\mathcal D$ is the space of negative (complex) lines in $(V_{\R}, \I_0)$, 
and where $\mathcal D_{\rm O}$ is the space of oriented negative $2$-planes in $V_{\R}$. The 
image of \eqref{embed} is precisely the set of negative $2$-planes that are stable by $\I_0$. In the 
cases of the Gauss field resp.\ the Eisenstein field, this invariance is equivalent to being stable 
under the action of $\mu_4$, resp.\ $\mu_6$. Hence in these two cases, the image of \eqref{embed} 
can also be identified with the fixed point locus of $\mu_4$ resp.\ $\mu_6$ in $\mathcal D_{\rm O}$. }
\end{rem}
\begin{rem}{\rm By going through the tables in \cite{rapo'}, \S 2, one sees that there is no further example of 
an occult period map of the type above which embeds the moduli stack of {\it hypersurfaces} of suitable degree and 
dimension into a Picard type moduli stack of abelian varieties. Note, however, that, in the case of curves of genus 4, 
the source of the hidden period morphism is a moduli stack of {\it complete intersections} of a certain multi-degree of dimension one, and there may be more examples of this type. }
\end{rem}

\bigskip
{\obeylines\parskip = 0pt
\baselineskip = 12pt

Department of Mathematics
University of Toronto
40 St. George St., BA6290
Toronto, ON M5S 2E4, Canada.
email: skudla@math.toronto.edu
}


\bigskip
{\obeylines\parskip = 0pt
\baselineskip = 12pt
Mathematisches Institut der Universit\"at Bonn  
Endenicher Allee 60 
53115 Bonn, Germany.
email: rapoport@math.uni-bonn.de
}

\end{document}